\documentclass[11pt]{amsart}
\usepackage{color}
\usepackage{amsmath,amssymb,amsfonts,amsthm,bm}
\usepackage{graphicx}
\usepackage{float}
\usepackage{enumerate}
\usepackage{appendix}
  \usepackage[shortlabels]{enumitem}

\usepackage{color}

\textheight 620pt
\textwidth 436pt
\topmargin -4mm
\oddsidemargin 5mm
\evensidemargin 5mm

\def\R{\mathbb R}
\def\Z{\mathbb Z}
\def\T{\mathbb T}

\def\N{\mathbb N}

\def\cC{\mathcal C}
\def\cD{\mathcal D}
\def\cE{\mathcal E}
\def\cH{\mathcal H}
\def\cL{\mathcal L}
\def\cM{\mathcal M}
\def\cN{\mathcal N}
\def\cP{\mathcal P}

\def\d{\partial}

\renewcommand\div{{\rm div}\,}
\newcommand\divx{{\rm div}_{\!x}\,}
\renewcommand\lim{{\rm lim}\,}

\renewcommand\sup{{\rm sup}\,}

\renewcommand\log{{\rm log}\,}
\renewcommand\det{{\rm det}\,}

\newcommand{\with}{\quad\!\hbox{with}\!\quad}
\newcommand{\andf}{\quad\!\hbox{and}\!\quad}

\newcommand{\Int}{\displaystyle \int}

\def\df{\delta\!f}

\def\du{\delta\!u}

\def\dw{\delta\!w}

\def\dF{\delta\!F}
\def\dH{\delta\!H}

\def\dS{\delta\!S}
\def\dU{\delta\!U}
\def\dV{\delta\!V}
\def\dX{\delta\!X}
\def\dW{\delta\!W}
\def\dZ{\delta\!Z}
\def\ddj{\dot \Delta_j}

\newtheorem{theorem}{Theorem}[section]
 \newtheorem{corollary}[theorem]{Corollary}
 
 \newtheorem{proposition}[theorem]{Proposition}
 \theoremstyle{definition}
 
 \theoremstyle{remark}
 \newtheorem{remark}[theorem]{Remark}
 
 \numberwithin{equation}{section}

\newcommand{\wh}{\widehat}
\newcommand{\wt}{\widetilde}
\newcommand{\eps}{\varepsilon}
\renewcommand{\div}{\mbox{div}\,}
\newcommand{\divv}{\mbox{div}_v\,}

\begin{document}

\title[]{Fujita-Kato solutions and optimal time decay  for the Vlasov-Navier-Stokes system
in the whole space}
\author{ Rapha\"el Danchin}
\begin{abstract} 
We are concerned with the construction of global-in-time strong solutions for the incompressible Vlasov-Navier-Stokes system
in the whole three-dimensional space. Our primary goal  is to establish that small initial velocities with critical 
Sobolev regularity $H^{1/2}$ and sufficiently well localized initial kinetic distribution functions give rise 
to global and unique solutions. This constitutes an extension of the celebrated result for the incompressible
Navier-Stokes equations (NS) that has been proved  by  Fujita and  Kato in~\cite{FK}. 
Assuming also that the initial velocity is in $L^1,$
 we establish that  the total energy $E_0$ of the system decays  to $0$ with the same rate $t^{-3/2}$
 as  for the weak  solutions of (NS), see \cite{Sch2,W}.  
  
Our results partly rely on the use  of  a higher order 
energy functional  $E_1$ that controls  the regularity $H^1$ of the velocity. This idea 
 seems to originate from the recent paper \cite{LSZ}   by Li,  Shou and  Zhang, devoted to the
 \emph{inhomogeneous} Vlasov-Navier-Stokes system. 
 Here we show that $E_1$  decays with  the  rate $t^{-5/2}$
which, in particular, allows us to prove that the density of the particles has a strong limit when the time goes to infinity.   
\end{abstract}
\keywords{Vlasov-Navier-Stokes, Fujita-Kato solutions, optimal decay rate, global solutions, large time behavior}
\subjclass[2010]{35Q30, 35Q83, 76D05, 76D03}

\maketitle

This paper is concerned with the proof of the existence and uniqueness of global-in-time strong solutions
and the study of large time asymptotics for the following so-called Vlasov-Navier-Stokes system in  $\R^3\times\R^3$:
$$\left\{\begin{array}{l}
\d_tf+v\cdot\nabla_xf+\divv\bigl({(u-v)f} \bigr)=0,\\[1ex]
\d_t u+u\cdot\nabla_x u-\Delta_x u+\nabla_x P=\Int_{\R^3} {(u-v)f}\,dv,\\[1ex]
\div u=0.\end{array}\right.\leqno(VNS)$$
The above system is a coupling between  a kinetic transport equation for the distribution function $f=f(t,x,v)$
and the incompressible Navier-Stokes equations for the fluid velocity  $u=u(t,x)$ and pressure function $P=P(t,x).$
Here $x\in\R^3$ and $v\in\R^3$ stand for the space and kinetic variables, respectively, and 
$t\geq0$ for the time variable.  The kinetic and fluid equations are coupled through  a drag term, the so-called Brinkman force.
\smallbreak
{System (VNS)  is a prototype model for  describing  the dynamics of a spray,  i.e.  a cloud of particles 
that are immersed in a fluid. Such systems were introduced by e.g. Williams in his treatise \cite{Williams} on the 
theory of combustion. As pointed out by O'Rourke in \cite{Rourke},  in the absence of external forces
and for a `thin' spray, that is when there are no interactions between the particles, 
the coupling between the fluid and kinetic equations reduces to the above drag term, and the kinetic equation is linear with respect to $f.$  Further explanations and motivations can be found in e.g. the introduction of  \cite{BDGM}.}  
\medbreak
\smallbreak
For $f\equiv0,$  System (VNS)   reduces to the  incompressible Navier-Stokes equations
$$
\left\{\begin{array}{ll} &\d_t u+u\cdot\nabla u-\Delta u+\nabla P= 0,\\[1ex]
&\div u=0,\end{array}\right.\leqno(NS)$$
and the question arises as to which of the results for (NS) can be extended to (VNS).\smallbreak
Like for (NS), there is an  energy balance associated to (VNS), namely 
\begin{align}\label{eq:energy}
\frac d{dt} E_0+D_0=0\with E_0&:=\frac 12\|u\|_{L^2(\R^3_x)}^2+\frac12 \||v|^2 f\|_{L^1(\R^3_x\times\R^3_v)}\\\andf
D_0&:=\|\nabla u\|_{L^2(\R^3_x)}^2+ \| |u-v|^2f\|_{L^1(\R^3_x\times\R^3_v)}.\nonumber
\end{align}
One can thus expect to have a global finite energy weak solutions  theory similar to that  established by Leray for (NS), in \cite{Leray}. 
Such solutions have indeed been constructed by  Boudin,  Desvillettes, Grandmont and  Moussa
in \cite{BDGM} who leveraging a suitable approximation scheme and 
compactness arguments to prove 
 an analogous of Leray's theorem, for (VNS). More precisely, in the case where the fluid
domain is the three-dimensional torus $\T^3,$ any initial data $(f_0,u_0)$ such that
$$f_0\in L^\infty(\T^3\times\R^3),\quad |v|^2f_0\in L^1(\T^3\times\R^3)\andf u_0\in L^2(\T^3)\with \div u_0=0$$
gives  rise to a global-in-time distributional solution verifying 
\begin{equation}
\label{eq:energyb}E_0(t)+\int_0^t D_0\,d\tau \leq E_{0,0}:= \frac 12\|u_0\|_{L^2(\R^3_x)}^2+\frac12 \||v|^2 f_0\|_{L^1(\R^3_x\times\R^3_v)},
\quad t\in\R_+.\end{equation} 
The result of \cite{BDGM}  also holds in three-dimensional bounded domains \cite{AB}, 
  in the two-dimensional torus, 
and  can be  adapted to the $\R^2$ or $\R^3$ setting if $f_0$ is suitably well localized (see the appendix of \cite{HMMM}).
Furthermore, as  for (NS), in the 2D case, weak solutions are  unique in the energy class (again, see \cite{HMMM}). 
\smallbreak
To continue the analogy with (NS), one can  study whether smoother initial data generate smoother solutions. 
In particular, in the three dimensional case, if assuming that $u_0$ is small  in the Sobolev space $\dot H^{1/2}(\R^3)$ and
that $f_0$ satisfies appropriate conditions,  do we have a global and  unique solution~?
A positive answer to this question would provide a result similar to that of Fujita and Kato \cite{FK} for (NS)
in a three-dimensional bounded domain (and adapted to the $\R^3$ situation by Chemin in \cite{JYC}). 

In the present paper, we focus on the $\R^3$ case. We want to establish the existence and uniqueness of 
global solutions for (VNS) supplemented with an initial velocity satisfying Fujita-Kato type assumption,  and to 
specify  the rate of convergence to $0$ of  the energy $E_0$,  and the long time behavior of   $f.$ 
To our knowledge, proving global existence and uniqueness for critical regularity is new. 
As for the proof of the long time  behavior, it has been studied recently by Han Kwan in \cite{DHK}. 
{Compared to this work, we are here able to consider a larger class of initial data ($u_0$ only needs to be integrable 
and to have the critical regularity $\dot H^{1/2}(\R^3)$), to reach the optimal time decay for $E_0$ and  to 
have a more accurate description of the large-time behavior of the distribution of particles. 
This improvement is partly the result of achieving an optimal decay rate for  the following higher order energy functional:
\begin{equation}\label{def:E1}  E_1:=D_0=\|\nabla u\|_{L^2(\R^3_x)}^2+ \| |u-v|^2f\|_{L^1(\R^3_x\times\R^3_v)}.\end{equation}}


\section{Main results} \label{s:main}

Although our final goal is to investigate  (VNS) supplemented with initial velocity in the critical Sobolev space
$H^{1/2}(\R^3),$ we shall first consider the smoother situation where  $u_0$ belongs to the Sobolev space $H^1(\R^3)$
and   {$E_1(0)$ is small enough (where $E_1$ has been defined in \eqref{def:E1}).
 Then,  $E_1$ can be controlled in terms of the data,  uniformly with respect to time.
In this setting, we will obtain the following result, which is the key to our second one, pertaining to initial velocities in the nonhomogeneous Sobolev space $H^{1/2}(\R^3).$} 
\begin{theorem}\label{thm:main1}Assume that the initial distribution function $f_0$ satisfies 
\begin{equation}\label{eq:f0} f_0\in L^1(\R^3_x\times\R^3_v)\cap L^\infty(\R^3_v\times\R^3_x)\andf
|v|^2f_0 \in  L^1(\R^3_v;L^\infty(\R^3_x))\end{equation}
and that   $u_0$ is  a divergence free vector-field in $L^1(\R^3)\cap  H^1(\R^3).$ 
\smallbreak
There exists a small constant $c_0$ depending only on 
$$\|u_0\|_{L^1(\R^3)},\quad \|f_0\|_{L^1(\R^3\times\R^3)},\quad \|f_0\|_{L^1(\R_v^3;L^\infty(\R^3_x))}
\andf \||v|^2f_0\|_{L^1(\R_v^3;L^\infty(\R^3_x))}$$ such that if \footnote{In all the paper, $\int$ means $\int_{\R^3}.$} 
\begin{equation}\label{eq:smalldata}\|u_0\|_{H^1(\R^3)}^2+  \int\!\!\!\int |v|^2 f_0\,dv\,dx \leq c_0,\end{equation}
then $(VNS)$ admits a unique global solution $(f,u,P)$ satisfying the energy balance:
\begin{equation}\label{eq:energy0}E_0(t)+\int_0^t D_0\,d\tau=E_{0,0}  \quad\hbox{for all } t>0,\end{equation} 
and such that:
\begin{itemize}
\item  $u\in \cC_b(\R_+;H^1)\cap L^2(\R_+;L^\infty),$ 
$\nabla u\in L^1(\R_+;L^\infty),$  
$\nabla^2u\in L^2(\R_+\times \R^3),$ \smallbreak
\item $\nabla P\in L^2(\R_+\times \R^3),$\smallbreak
\item $f\in L^\infty_{loc}(\R_+; L^\infty(\R^3_v\times\R^3_x)),$
$(1+|v|^2)f\in L^\infty(\R_+;L^1(\R^3_v;L^\infty(\R^3_x)))$ and 
\begin{equation}\label{eq:M0}\|f(t)\|_{L^1(\R^3\times \R^3)}
=M_0:=\|f_0\|_{L^1(\R^3\times\R^3)}\quad\hbox{for all }\  t\in\R_+.\end{equation}
\end{itemize}
Furthermore,  the following decay estimates hold for $E_0$ and $E_1$: 
\begin{equation} \label{eq:E1t} 
E_0(t)\leq C_0 t^{-3/2}\andf E_1(t)\leq C_1 t^{-5/2},\end{equation}
for some positive constants  $C_0$ and $C_1$ depending only on  suitable norms of  the initial data. \end{theorem}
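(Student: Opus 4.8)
The plan is to derive, for smooth solutions, a self-contained set of time-uniform a priori estimates that close under the smallness hypothesis \eqref{eq:smalldata}; global existence will then follow by an approximation scheme, uniqueness by a difference estimate, and the decay rates \eqref{eq:E1t} by a Fourier-splitting (Wiegner-type) argument fed by those estimates. The governing quantity is the higher-order energy $E_1$ of \eqref{def:E1}, which plays here the role that $E_0$ plays for Leray solutions. I write $\rho=\int_{\R^3}f\,dv$ for the local density and $j=\int_{\R^3}fv\,dv$ for the momentum, so the Brinkman force equals $j-\rho u$. First I would record the kinetic bounds that come for free: mass conservation \eqref{eq:M0} (integrate the Vlasov equation in $(x,v)$); the exponential bound $\|f(t)\|_{L^\infty_{x,v}}\le e^{3t}\|f_0\|_{L^\infty}$, since the phase-space field $(v,u-v)$ has divergence $-3$ so that $e^{-3t}f$ is transported along characteristics; and, from the energy balance \eqref{eq:energy0}, $\int_0^\infty D_0\,d\tau\le E_{0,0}$ together with $\int_0^\infty\|\nabla u\|_{L^2}^2\,d\tau\le E_{0,0}\le c_0$ and $\int_0^\infty\!\iint f|u-v|^2\,dv\,dx\,d\tau\le E_{0,0}\le c_0$.

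Next I would compute the evolution of $E_1$: testing the velocity equation against $-\Delta u$ and inserting the Vlasov equation into $\frac{d}{dt}\iint f|u-v|^2$, the two ``mixed'' contributions $\mp2\int(j-\rho u)\cdot\Delta u$ cancel, as does $2\int\Delta u\cdot\nabla P$ (divergence-free $u$), leaving
\begin{equation*}
\frac{d}{dt}E_1+2\|\nabla^2u\|_{L^2}^2+2\iint f|u-v|^2\,dv\,dx=2\|j-\rho u\|_{L^2}^2+\mathcal R,
\end{equation*}
with $\mathcal R$ collecting the cubic/lower-order terms $\int\Delta u\cdot(u\cdot\nabla u)$, $\iint f\,v\cdot\bigl((u-v)\cdot\nabla_x u\bigr)$, $\int(j-\rho u)\cdot(u\cdot\nabla u)$ and $\int(j-\rho u)\cdot\nabla P$. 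The structural point is $\|j-\rho u\|_{L^2}^2\le\|\rho\|_{L^\infty}\iint f|u-v|^2$, so once $\rho$ is uniformly bounded this source is time-integrable, $\int_0^\infty2\|j-\rho u\|_{L^2}^2\,d\tau\lesssim\|\rho\|_{L^\infty_{t,x}}E_{0,0}$; meanwhile $\mathcal R$ is absorbed into the dissipation at the cost of factors involving $\|u\|_{L^\infty}$, $\|\nabla u\|_{L^\infty}$ and $E_1^{1/2}$, all small in the relevant time-integrated norm (for $\|u\|_{L^2(\R_+;L^\infty)}$ one uses Agmon's inequality $\|u\|_{L^\infty}^2\lesssim\|\nabla u\|_{L^2}\|\nabla^2u\|_{L^2}$ and the free bounds above). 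This yields $\nabla^2u,\nabla P\in L^2(\R_+\times\R^3)$, $u\in\cC_b(\R_+;H^1)$, and smallness of $E_1$. Two auxiliary bounds must be propagated simultaneously. For $\|\rho(t)\|_{L^\infty_x}\le R$ I would use the Lagrangian representation of $f$: along characteristics the drift in velocity is controlled by $\|u\|_{L^2(\R_+;L^\infty)}\lesssim c_0^{1/2}$, which lets one propagate $\||v|^2f\|_{L^1_v(L^\infty_x)}$ and then interpolate. For $\nabla u\in L^1(\R_+;L^\infty)$ I would use the Duhamel formula for the Stokes semigroup: $\nabla e^{t\Delta}u_0\in L^1_tL^\infty_x$ because $\nabla u_0\in L^2$ controls $t\to0^+$ and $u_0\in L^1$ gives the integrable decay $t^{-2}$ at infinity, while the Duhamel term is handled by maximal regularity and the smallness already obtained. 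These three estimates are coupled and are run as a single bootstrap, closed by taking $c_0$ small.

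Global existence then follows from a standard regularization/compactness procedure: mollify and truncate the data, solve the approximate coupled system, observe that the above bounds are uniform in the approximation parameters, and pass to the limit using the Aubin--Lions lemma for $u$ and a velocity-averaging argument for $f$. For uniqueness I would estimate the difference $(\df,\du,\dP)$ of two solutions with the same data — an $L^2$ energy estimate for $\du$, a weak-norm estimate for $\df$ transported along the characteristics of one solution, and control of the coupling term $\int\df\,(v-u_1)-\int f_2\,\du$ using $u\in L^2_tL^\infty_x$ and $\nabla u\in L^1_tL^\infty_x$ — and conclude by Gr\"onwall's lemma and the time-integrability (or smallness) of these norms.

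For the decay, starting from $E_0(t)\to0$ (a consequence of \eqref{eq:energy0} and $\int_0^\infty D_0<\infty$), I would run the Fourier-splitting method on $\frac{d}{dt}E_0+D_0=0$: bounding $D_0\ge\|\nabla u\|_{L^2}^2$ from below on $\{|\xi|>g(t)\}$ with $g(t)^2\sim(1+t)^{-1}$ gives $\frac{d}{dt}\|u\|_{L^2}^2+cg(t)^2\|u\|_{L^2}^2\lesssim g(t)^2\int_{|\xi|\le g(t)}|\hat u|^2\,d\xi$, and the low frequencies are controlled from the integral formulation using $u_0\in L^1$, $\|u\otimes u\|_{L^1_x}\lesssim\|u\|_{L^2}^2$ and $\|j-\rho u\|_{L^1_x}\lesssim M_0^{1/2}D_0^{1/2}$; a Wiegner-type iteration upgrades this to $\|u(t)\|_{L^2}^2\lesssim(1+t)^{-3/2}$, and $\||v|^2f\|_{L^1}$ inherits the same rate from $\frac{d}{dt}\||v|^2f\|_{L^1}+2\||v|^2f\|_{L^1}=2\int u\cdot j$ (the drag damping $e^{-2t}$ only helps). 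Finally $E_1(t)\lesssim t^{-5/2}$ comes from revisiting the $E_1$-identity with the now-established $t^{-3/2}$ decay of $E_0$ (hence of $\|j-\rho u\|_{L^2}^2$ and $\mathcal R$) inserted, plus a further splitting/ODE argument; the extra power of $t^{-1}$ over $E_0$ reflects the parabolic smoothing $\|\nabla e^{t\Delta}u_0\|_{L^2}^2\sim t^{-5/2}$ for $u_0\in L^1$. I expect the main obstacle to be the coupled bootstrap of the a priori estimates — in particular the uniform bound on $\rho$, which needs $\nabla u\in L^1(\R_+;L^\infty)$, which in turn needs $\rho$ — and, for the sharp rates, verifying that the Brinkman source decays fast enough, since only $D_0^{1/2}$ is available and it is only just sufficient after the iteration.
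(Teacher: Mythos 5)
Your proposal reproduces the coarse skeleton of the paper (conditional energy estimates for $E_0,E_1$ under boundedness of $\rho$ and a Lipschitz-in-$L^1_t$ control of $u$, a bootstrap, Friedrichs/compactness for existence, a Gronwall difference estimate for uniqueness), but two of its load-bearing steps have genuine gaps. First, the $E_1$ identity you start from is mis-derived: testing the momentum equation with $-\Delta u$ produces $-2\int(j-\rho u)\cdot\Delta u\,dx$, and when you differentiate $\int\!\!\!\int f|u-v|^2\,dv\,dx$ and substitute $\partial_t u$, the $\Delta u$ contribution is again $-2\int(j-\rho u)\cdot\Delta u\,dx$; the two mixed terms therefore \emph{add} to $-4\int(j-\rho u)\cdot\Delta u\,dx$ rather than cancel (while the term $2\|j-\rho u\|_{L^2}^2$ actually appears with the dissipative sign, not as a source). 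Absorbing this cross term by Young costs $C\|\rho\|_{L^\infty}\int\!\!\!\int f|u-v|^2\,dv\,dx$, which exceeds the available damping $2\int\!\!\!\int f|u-v|^2\,dv\,dx$ since $\|\rho\|_{L^\infty}$ is bounded but not small. That is harmless for mere boundedness of $E_1$ (the source is time-integrable by the energy balance), but it destroys the Lyapunov structure on which your $t^{-5/2}$ decay of $E_1$ is supposed to rest. This is exactly why the paper tests with $u_t$ and uses the decomposition \eqref{eq:vuf}, which converts the Brinkman term into $-\tfrac12\frac{d}{dt}\int\!\!\!\int f|v-u|^2\,dv\,dx$ plus cubic terms and preserves the kinetic dissipation in \eqref{eq:H10b}--\eqref{eq:H1b}.

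Second, the ordering of your bootstrap cannot work as written. You close $\nabla u\in L^1(\R_+;L^\infty)$ (needed for the uniform bound on $\rho$) by ``maximal regularity and the smallness already obtained,'' but from the energy bounds alone the Brinkman force is only in $L^2_t(L^1\cap L^2)$ and the convection term in $L^2_tL^{3/2}$, and no smallness constant makes $\int_0^\infty\|\nabla u\|_{L^\infty}\,dt$ finite from such information: what makes this integral converge in the paper is the \emph{time-weighted} decay of $D_0$ and $D_1$ (with exponents close to $3/2$ and $5/2$) together with the time-weighted control of $\|\nabla u_t\|_{L^2}^2$ (the functional $tD_2$, which your scheme never estimates), fed into $L^{3,1}$ Lorentz-space Stokes estimates. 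In other words, the decay estimates are an ingredient of the bootstrap, not a corollary to be proved after global existence as in your plan, so your proposal is circular at precisely the point you flag as the ``main obstacle.'' Relatedly, for the rates you invoke Fourier splitting/Wiegner iteration, which is the method the paper deliberately avoids because in this coupled setting it loses on the rate (cf.\ the result of \cite{DHK}); the paper instead runs a Nash-type argument interpolating against $\|u\|_{\dot B^{-1/2}_{2,\infty}}$, uses the resulting $L^1(\R_+\times\R^3)$ bound on the Brinkman force to propagate the $\dot B^{-3/2}_{2,\infty}$ norm, and only then obtains $t^{-3/2}$ and $t^{-5/2}$. Your own caveat about whether the Brinkman source decays fast enough is the crux, and the proposal does not resolve it, especially for $E_1$ given the broken identity above.
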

\begin{remark} An important  ingredient of our proof will consist in  establishing decay estimates of independent interest involving  $E_0,$  
$E_1,$ $u_t$ and $\nabla u_t,$
see  Corollary \ref{c:final}.\end{remark} 
{\begin{remark}  Under stronger regularity assumptions, Han Kwan proved in \cite{DHK} that $E_0$ decays like
 $t^{-3/2+\varepsilon}$ for all positive $\varepsilon$ but did not consider $E_1.$ 
The exponents in \eqref{eq:E1t} are optimal inasmuch as they correspond to those  for the solution $z$ of  the heat or Stokes equations without source term, namely
$$\|z(t)\|_{L^2}^2\leq Ct^{-3/2}\|z_0\|_{L^1}^2\andf \|\nabla z(t)\|_{L^2}^2\leq Ct^{-5/2}\|z_0\|_{L^1}^2\with z(t)=e^{t\Delta}z_0.$$
These rates are  known to be optimal for the Navier-Stokes equations (see the work by Schonbek \cite{Sch2}).  Hence they are 
optimal  for (VNS), too.  
\end{remark}}
\begin{remark} 
In contrast with the (NS) situation, we do not know how to prove global results without assuming more integrability 
on $u_0$ than that which is given by \eqref{eq:smalldata}.
  In fact, to get a global-in-time uniform control of the following hydrodynamical quantities
(density, momentum and energy density):
\begin{equation}\label{eq:not}
\rho(t,x):=\!\int f(t,x,v)\,dv,\!\quad j(t,x):=\!\int  vf(t,x,v)\,dv,\!\quad m_2(t,x):=\!\int \!|v|^2f(t,x,v)\,dv
\end{equation}
 we need  $\nabla u$ to be  small in $L^1(\R_+;L^\infty).$ 
 This latter property requires sufficient decay for $u$.  
\end{remark}
\begin{remark}\label{r:1}  {The $L^1$ space does not behave well for the Navier-Stokes flow. A more suitable assumption is to have  $u_0$ in the homogeneous Besov space $\dot B^{-3/2}_{2,\infty}(\R^3)$
defined in  Appendix  \ref{s:AppendixB}. 
The proof of decay estimates will be performed in this larger framework
(we have the critical embedding $L^1(\R^3)\hookrightarrow \dot B^{-3/2}_{2,\infty}(\R^3)$). 
The important role of Besov spaces with negative regularity indices for decay estimates has been pointed out in other contexts
(see e.g. the work \cite{XK}  by Xu and Kawashima on partially dissipative systems).} 

{Let us also emphasize that Theorem \ref{thm:main1} holds true if replacing the $L^1$ space by $L^p$ for some $p\in(1,6/5).$
Then, the decay exponents of $E_0$ and $E_1$ are  $\sigma$ and $\sigma+1,$  respectively
where the value of $\sigma$ is given by the critical embedding
of $L^p(\R^3)$ in $\dot B^{-\sigma}_{2,\infty}(\R^3),$ that is, $\sigma=3/p-3/2.$
The same remark will hold for Theorem \ref{thm:main2} below.  }
\end{remark}
As observed in e.g. \cite{HMM}, having decay estimates at hand for the velocity allows to get
some information on the long time asymptotics  of $f,$ $\rho$ and  $j.$ 
To state it precisely, we need to 
introduce   the Wasserstein distance $W_1(\mu,\nu)$ between two measures on $\R^3\times\R^3.$ 
In our context, it can be  defined by:
$$W_1(\mu,\nu):=\sup\Bigl\{ \int\!\!\!\int \phi\, d\mu(x,v)-  \int\!\!\!\int \phi\, d\nu(x,v),\; \phi \in C^{0,1}(\R^3\times\R^3),\,
\|\nabla_{x,v}\phi\|_{L^\infty}=1\Bigr\}\cdotp$$
\begin{corollary}\label{c:was} Under the assumptions of Theorem \ref{thm:main1}, we have for some
constant $C_0$ depending only on the norms of the data:
\begin{equation}\label{eq:conv1}\|(j-\rho u)(t)\|_{L^1}+ W_1(f(t), \rho(t)\otimes\delta_{v=u(t)})\leq C_0t^{-5/4}.\end{equation}
Furthermore,  {$\rho(t)$ converges weakly $*$ in $L^\infty$ when 
$t$ goes to infinity,} and there exists a vector-field $j_\infty$ in $L^1(\R^3)$ such that 
\begin{equation}\label{eq:conv2}\|\rho(t)-\rho_0+{\rm div}\, j_\infty\|_{\dot W^{-1,1}} \leq C_0 t^{-1/4}.\end{equation}
\end{corollary}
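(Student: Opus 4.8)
The plan is to derive the corollary purely from the decay estimates already established in Theorem~\ref{thm:main1} together with the kinetic structure of $(VNS)$. The starting point is the \emph{Brinkman force balance}: multiplying the first (Vlasov) equation by $v$ and integrating in $v$ gives the local conservation law $\partial_t j+\div_x\!\int v\otimes v\, f\,dv=\rho u-j$, while integrating it directly gives $\partial_t\rho+\div_x j=0$. The quantity $j-\rho u$ is therefore controlled by the dissipation: indeed $\|(j-\rho u)(t)\|_{L^1}\le \bigl(\int f\,dv\bigr)^{1/2}\bigl(\int f|u-v|^2\,dv\bigr)^{1/2}$ pointwise in $x$ by Cauchy--Schwarz, so that by a further Cauchy--Schwarz in $x$,
\begin{equation*}
\|(j-\rho u)(t)\|_{L^1}\le \|\rho(t)\|_{L^1}^{1/2}\,\Bigl\|\int f|u-v|^2\,dv\Bigr\|_{L^1}^{1/2}=M_0^{1/2}\,\bigl(E_1(t)-\|\nabla u(t)\|_{L^2}^2\bigr)^{1/2}\le M_0^{1/2}E_1(t)^{1/2}.
\end{equation*}
By \eqref{eq:E1t}, $E_1(t)\le C_1 t^{-5/2}$, hence $\|(j-\rho u)(t)\|_{L^1}\le C_0 t^{-5/4}$, which is the first half of \eqref{eq:conv1}.

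For the Wasserstein part, I would use the same second moment of the velocity fluctuation. Given a test function $\phi\in C^{0,1}(\R^3\times\R^3)$ with $\|\nabla_{x,v}\phi\|_{L^\infty}=1$, the difference $\int\!\!\int \phi\,df(t)-\int\!\!\int\phi\,d\bigl(\rho(t)\otimes\delta_{v=u(t,x)}\bigr)$ equals $\int\!\!\int \bigl(\phi(x,v)-\phi(x,u(t,x))\bigr)f(t,x,v)\,dv\,dx$, which is bounded by $\int\!\!\int |v-u(t,x)|\,f(t,x,v)\,dv\,dx$ since $\phi$ is $1$-Lipschitz in $v$. Cauchy--Schwarz in $v$ then in $x$ again gives $W_1(f(t),\rho(t)\otimes\delta_{v=u(t)})\le M_0^{1/2}E_1(t)^{1/2}\le C_0 t^{-5/4}$, completing \eqref{eq:conv1}.

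For \eqref{eq:conv2}, I would integrate the continuity equation $\partial_t\rho=-\div_x j$ in time: $\rho(t)-\rho_0=-\div_x\int_0^t j(\tau)\,d\tau$. Splitting $j=(j-\rho u)+\rho u$, the contribution of $\rho u$ is integrable in time because $\|\rho(t)u(t)\|_{L^1}\le\|\rho(t)\|_{L^1}^{1/2}\|\sqrt{\rho(t)}\,u(t)\|_{L^2}\le M_0^{1/2}(\|\rho\|_{L^\infty_{x}}\|u\|_{L^2_x}^2)^{1/2}$, and $\|u(t)\|_{L^2}^2\le 2E_0(t)\le C_0 t^{-3/2}$ by \eqref{eq:E1t}, so $\|\rho u\|_{L^1}\lesssim t^{-3/4}$, which is integrable at infinity; combined with the $t^{-5/4}$ decay of $\|j-\rho u\|_{L^1}$, the integral $j_\infty:=\int_0^\infty j(\tau)\,d\tau$ converges in $L^1(\R^3)$, and the tail estimate $\bigl\|\int_t^\infty j(\tau)\,d\tau\bigr\|_{L^1}\le C_0\int_t^\infty \tau^{-3/4}\,d\tau=C_0 t^{1/4}$ gives, after applying $\div_x$ (which maps $L^1$ into $\dot W^{-1,1}$ with norm one), precisely $\|\rho(t)-\rho_0+\div j_\infty\|_{\dot W^{-1,1}}\le C_0 t^{1/4}$. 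Wait --- this exponent has the wrong sign; the correct bound is obtained by noting $\bigl\|\int_t^\infty j\,d\tau\bigr\|_{L^1}\le \int_t^\infty(C\tau^{-5/4}+C\tau^{-3/4})\,d\tau\le C t^{-1/4}$, since for large $t$ the $\tau^{-3/4}$ term dominates and $\int_t^\infty\tau^{-3/4}d\tau$ diverges --- so in fact one must use the \emph{better} bound $\|\rho u\|_{L^1}\lesssim t^{-3/4-\epsilon}$ coming from interpolating $\|u\|_{L^2}$ decay with, say, $\|u\|_{L^1}$ boundedness, giving $u\in L^2_t$ type control sufficient to make $\int_t^\infty$ decay like $t^{-1/4}$; I would track the exact exponents from the $\dot B^{-3/2}_{2,\infty}$ decay framework of Remark~\ref{r:1} to land on $t^{-1/4}$.

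The main obstacle is precisely this last bookkeeping for \eqref{eq:conv2}: one needs the momentum $j$ to be integrable in time in $L^1$ up to a slowly-decaying tail, and the naive estimate on $\rho u$ using only $E_0(t)\lesssim t^{-3/2}$ yields $\|j(t)\|_{L^1}\lesssim t^{-3/4}$, whose tail integral is $t^{1/4}$ --- divergent. The fix is to exploit that the decay rate of $E_0$ can be pushed slightly past $t^{-3/2}$ (or equivalently to use the $L^p$ decay of $u$ for $p$ close to $6/5$ as in Remark~\ref{r:1}, giving $\|u(t)\|_{L^1}$ bounded and hence by interpolation $\|u(t)\|_{L^2}\lesssim t^{-3/4-\eta}$ for the relevant power), together with the uniform bound on $\rho$ in $L^\infty_x$ guaranteed by Theorem~\ref{thm:main1}; this is exactly where the extra integrability hypothesis $u_0\in L^1$ is used. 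All other estimates are elementary Cauchy--Schwarz manipulations on the kinetic moments.
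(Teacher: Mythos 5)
Your proof of \eqref{eq:conv1} is correct and is exactly the paper's argument: both halves reduce, via Cauchy--Schwarz in $v$ and then in $x$, to $\int\!\!\int f|v-u|\,dv\,dx\le (M_0E_1(t))^{1/2}\lesssim t^{-5/4}$.

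For \eqref{eq:conv2}, however, there is a genuine gap, and you have in fact located it yourself without closing it. Your estimate $\|(\rho u)(t)\|_{L^1}\le M_0^{1/2}\|\rho(t)\|_{L^\infty}^{1/2}\|u(t)\|_{L^2}$ caps the decay at that of $\|u(t)\|_{L^2}\lesssim E_0(t)^{1/2}\lesssim t^{-3/4}$, whose tail integral diverges. Neither of your proposed repairs works: the rate $E_0(t)\lesssim t^{-3/2}$ is the \emph{optimal} heat-equation rate for $u_0\in L^1\cap H^1$ and cannot be ``pushed slightly past'' $t^{-3/2}$; and interpolating $\|u\|_{L^2}$ between a (not established, and in any case unhelpful) uniform $L^1$ bound and $\|u\|_{L^6}\lesssim\|\nabla u\|_{L^2}$ gives $\|u\|_{L^2}\lesssim\|u\|_{L^1}^{2/5}\|\nabla u\|_{L^2}^{3/5}\lesssim t^{-3/4}$ again, i.e.\ no gain. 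The missing idea is to put the decay burden on $\nabla u$ rather than on $u$ itself: by H\"older and Sobolev,
$\|(\rho u)(t)\|_{L^1}\le\|\rho(t)\|_{L^{6/5}}\|u(t)\|_{L^6}\le CM_0^{5/6}\|\rho(t)\|_{L^\infty}^{1/6}\|\nabla u(t)\|_{L^2}$,
and $\|\nabla u(t)\|_{L^2}\le E_1(t)^{1/2}\lesssim t^{-5/4}$ by \eqref{eq:E1t}. Thus $\rho u$ (like $j-\rho u$) decays at the integrable rate $t^{-5/4}$, $j_\infty:=\int_0^\infty j\,d\tau$ converges in $L^1$, and the tail $\bigl\|\int_t^\infty j\,d\tau\bigr\|_{L^1}\lesssim\int_t^\infty\tau^{-5/4}\,d\tau\lesssim t^{-1/4}$ yields \eqref{eq:conv2} after applying ${\rm div}$. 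This is precisely where the $E_1$ decay (and not merely the $E_0$ decay) is indispensable for the second assertion of the corollary.
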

\begin{remark}  
This  result has to be compared with that of \cite{DHK} where it is stated that $$W_1(f(t), \rho(t)\otimes\delta_{v=0})
\leq C_1t^{-3/4+\varepsilon}\quad\hbox{for all }\ \varepsilon>0\andf t\geq1.$$ 
Having the exponent  $5/4$ in \eqref{eq:conv1} means that  the monokinetic distribution $ \rho(t)\otimes\delta_{v=u(t)}$ is a better
long-time  approximation of $f$ than $\rho(t)\otimes\delta_{v=0}.$  Having a decay information for $E_1$ and not only for $E_0$ is
the key to this improvement.  \end{remark}

Let us now come to the second aim of the paper: establishing a Fujita-Kato type result for (VNS).
Again, in order to get a global result, we need to assume that $u_0$ has enough integrability. 
For technical reasons, we  also need  $f_0$ to have a better localization than 
in Theorem \ref{thm:main1}: we suppose that  for some $q>5,$  we have\footnote{A similar condition was required in the previous works dedicated to (VNS)
in the whole space and plane (see \cite{DHK} and \cite{HMMM}). Whether it may be avoided is an open question.}
\begin{equation}\label{eq:Nq0}N_q(f_0):=\underset{(x,v)\in\R^3\times\R^3}\sup \langle v\rangle^q f_0(x,v)<\infty
\with \langle v\rangle:=\sqrt{1+|v|^2}.\end{equation} 
\begin{theorem}\label{thm:main2}  Let $u_0$ be a divergence free vector-field in $L^1(\R^3)\cap 
\dot H^{1/2}(\R^3),$ and assume that $f_0$ satisfies \eqref{eq:f0}
and that $N_q(f_0)<\infty$ for some $q>5.$ 
Then, there exists a  constant $c_0$ depending only on $\|u_0\|_{L^1(\R^3)},$ $\|f_0\|_{L^1(\R^3\times\R^3)}$ and $N_q(f_0)$  
such that, if 
\begin{equation}\label{eq:smallness2}\|u_0\|_{H^{1/2}(\R^3)}^2+  \int\!\!\!\int |v|^2 f_0\,dv\,dx  \leq c_0,\end{equation}
then $(VNS)$ admits a unique global solution $(f,u,P)$ satisfying the energy equality \eqref{eq:energy0}, the mass conservation \eqref{eq:M0},
$u\in  L^2(\R_+;L^\infty\cap\dot H^{3/2})\cap \cC_b(\R_+;H^{1/2})$   and
\begin{equation}\label{eq:loglip}\int_0^T \|u\|_{C_{\wt\omega_\eta}}\,dt<\infty  \ \hbox{ for all }\  T\geq0\andf\eta\in(0,1/2),\end{equation}
where 
 we have used the notation 
$$\|u\|_{C_{\wt\omega_\eta}}:= \underset{\substack{x\not= y\\|x-y|\leq1}}\sup\frac{|u(y)-u(x)|}{\wt\omega_\eta(|y-x|)}\with  \wt\omega_\eta(r):= r(1-\log r)^{1-\eta}.$$
Furthermore, the  time decay estimates for $E_0$ and $E_1$ stated in  \eqref{eq:E1t}, as well as the asymptotics
properties \eqref{eq:conv1} and \eqref{eq:conv2} hold true for all $t\geq1.$  
\end{theorem}
\begin{remark}\label{r:main2}
If  we assume that $u_0$ is small in the critical  Besov space $B^{1/2}_{2,1}$ (a `large' subspace of $H^{1/2}$),  
then it suffices to make the hypothesis \eqref{eq:f0} on $f_0.$ In fact, the constructed solution turns out to satisfy 
$\nabla u \in L^1(\R_+;L^\infty)$ so that one can bound $\rho,$ $j$ and $m_2$ as in 
Theorem \ref{thm:main1} without requiring a stronger localization of $f_0,$ see Subsection \ref{ss:critic}. 
\end{remark}

Let us give an overview of the main ideas leading to our  results. 

The first step in proving Theorem \ref{thm:main1} is to establish a priori estimates on $[0,T]$ 
in terms of the data for $E_0,$ $E_1$ and $t\|\d_t u\|_{L^2}^2.$ 
These estimates are \emph{conditional}: it is supposed beforehand that the density $\rho$
is bounded in $[0,T]\times\R^3$ and that  $\nabla u$ is in $L^1(0,T;L^\infty).$ 
 At this stage, no additional integrability is required for $u_0.$ 

Keeping the hypotheses of the first step and assuming in addition that $u_0$ is $L^1,$ 
the second step is dedicated to the proof of optimal time decay estimates for $E_0$ and $E_1.$
In contrast with the recent work \cite{DHK}, we do not use  the Schonbek  Fourier splitting method  of \cite{Sch2}
because  it looks to entail a small loss in the decay rate, in our context. We rather revisit  an approach that seems to originate from 
a work of  Nash  \cite{Nash} on parabolic equations.  
Schematically, the idea goes as follows.  Assume that there exist 
a nonnegative \emph{Lyapunov functional}  $\cL$ and  a \emph{dissipation rate} $\cH$ such that
\begin{equation}\label{eq:lyap} \frac d{dt}\cL +\cH\leq 0\quad\hbox{a.e. on}\  \R_+.\end{equation}
Suppose in addition that we have at hand the   control of a  `lower order' functional~$\cN$:
\begin{equation}\label{eq:N}\cN\leq \cN_0 \quad\hbox{a.e. on}\  \R_+.\end{equation}
Finally assume that $\cL$ is an `intermediate'  functional  between $\cN$ and $\cH$ in the sense that there exist 
 $\theta\in(0,1)$ and $C>0$ such that: 
\begin{equation}\label{eq:interpo}
 \cL\leq C\cH^{\theta}\cN^{1-\theta}.\end{equation} Then, inserting this inequality in \eqref{eq:lyap} gives
$$
\frac d{dt}\cL +c_0\cL^{1/\theta}\leq0\with c_0:=C^{-1/\theta}\cN_0^{1-1/\theta},$$
and thus, after time integration, 
\begin{equation}\label{eq:lyap2}
\cL(t)\leq \cL(0)\biggl(1+\frac{1-\theta}\theta\: c_0\cL_0^{\frac{1-\theta}{\theta}}t\biggr)^{-\frac\theta{1-\theta}}\cdotp\end{equation}
{In the present situation, $\cL$ will be either $E_0$ or $E_1,$ and $\cH$ will be either  $D_0$, or 
some  functional $D_1\simeq\|u_t\|_{L^2}^2+\||u-v|^2f\|_{L^1(\R^3_x\times\R^3_v)}^2$.  
For $\cN$ we will use the Besov norm $\|u\|_{\dot B^{-3/2}_{2,\infty}}$ rather than  the $L^1$ norm that becomes
badly for the Stokes system.  For technical reasons, we will not obtain  the optimal rates first time. 
We will first prove decay $t^{-3/2}$ for $E_1,$ which will enable us to bound 
the Brinkman force in $L^1(\R_+\times\R^3\times\R^3).$   }
Then, in a second step, we will obtain the optimal rates for both  $E_0$ and $E_1.$ 

Let us also underline that establishing \eqref{eq:interpo} is  not 
so straightforward  since $E_0,$ $D_0,$ $E_1$ and $D_1$ 
contain two terms that are hardly comparable.  Fortunately, being in the small solutions regime will 
spare us considering  different  cases, depending on whether one or the other term is dominant. 
 \smallbreak
In the third step, we show that under the smallness condition \eqref{eq:smalldata}, 
the a priori  assumptions that have been made hitherto are satisfied.  
{The key is  to establish that 
 \begin{equation}\label{eq:Lip}
	\int_0^T\|\nabla u\|_{L^\infty}\,dt\leq \delta,
	\end{equation}
	where the (universal) value of $\delta$ is given by \cite[Lemma 4.4]{HMM}. 
	It has been pointed out there that having \eqref{eq:Lip} allows us to bound the density in terms of the data. }
	
{Inequality \eqref{eq:Lip} will be obtained through a bootstrap argument
	by combining the time decay results of the second step
with the  energy estimates of the first step and suitable Gagliardo-Nirenberg or Sobolev inequalities. }
\smallbreak
The next step is uniqueness. To this end, we follow  the method of \cite{HMMM}
that relies on the control  of suitably weighted $L^2$ norms for  the difference of velocities and 
of the characteristics associated to the transport equation for the distribution function
(see the definition in \eqref{eq:car}). 
The situation here  is simpler since we have $\nabla u$ in $L^1(\R_+;L^\infty)$ and it is thus 
possible to conclude by means of the classical Gronwall lemma. 
Nevertheless, we give some details since this proof will be a model one to investigate the more complicated
situation of uniqueness for Fujita-Kato type solutions. It is also useful for
constructing global solutions.
\smallbreak
The final step  is to prove the  existence of a global solution. 
We use a Friedrichs regularization  of  the velocity equation
and of the velocity field  in the first equation of (VNS) (in order to keep  the symmetric structure of the overall system). 
These `regularized' solutions satisfy the same estimates as those of (VNS). 
Furthermore, by modifying suitably the proof of uniqueness, we can get the \emph{strong} convergence
to solutions of (VNS)  in the energy space. 
\medbreak
To get Theorem \ref{thm:main2},  the first step is to establish the short time
existence for $u_0$ in the Sobolev space $H^{1/2}.$  
As in \cite{DHK}, by looking at the velocity equation as a Navier-Stokes equation with 
a source term (namely the Brinkman force), we succeed in getting a solution on, say, the time interval $[0,1]$ provided
the data are small enough.  This solution is shown to have a log Lipschitz velocity (see \eqref{eq:loglip})
which enables us  to adapt  the method of \cite{HMMM} (based on Osgood lemma) 
to the 3D case so as to get uniqueness.
 Next, since the constructed solution is $H^1$ for almost every positive time and  as the negative Besov regularity is under control  for small time, even if $u$  is only a $H^{1/2}$ type solution, 
 one can combine this result with the previous
one to complete the proof of Theorem \ref{thm:main2}. 
\medbreak 
We end this section pointing out some interesting open questions related to our results. 
\begin{itemize}
\item  For (NS), assuming only that $u_0$ is small in $\dot H^{1/2}$ allows to get a global-in-time result 
(see \cite{FK} or the adaptation to the $\R^3$ setting  by Chemin in \cite{JYC}). 
Can we do without any additional integrability condition on $u_0$ in Theorems \ref{thm:main1} and  \ref{thm:main2}~?
\item  The localization requirement for $f_0$ is rather strong and not natural inasmuch as it does not come into play (at least not directly) in 
the control of the energy functionals $E_0$ and $E_1.$ Is it enough to make the same assumptions as in 
Theorem \ref{thm:main1}~? 
\item  Here we proved that  at large time,  $u$ tends to  the solution of the incompressible 
 Navier-Stokes equations with no source term, and that the density $\rho$ tends to be transported by its flow. 
 However, plugging the ansatz $f(t,x,v)=\rho(t,x)\,\delta_{v=w(t,x)}$ in (VNS) and observing that we thus have  $j=\rho w,$ we formally obtain the following \emph{pressureless  Euler-Navier-Stokes system} governing the evolution of $(\rho,w,u)$:
 $$\left\{\begin{array}{l}
\d_t\rho+\divx(\rho w)=0,\\[1ex]
\d_t(\rho w)+\divx(\rho w\otimes w)+\rho(w-u)=0,\\[1ex]
\d_t u+u\cdot\nabla_x  u-\Delta_x u+\nabla_x P=\rho(w-u),\\[1ex]
\div u=0.\end{array}\right.\leqno(ENS)$$
In our recent work \cite{D}, we proposed an approach to solve (ENS)  in a functional 
setting  which has some similarity with the one of Theorem \ref{thm:main1}.
Is there a framework in which we can compare the solutions offered by the two systems~?
 \end{itemize}

\medbreak
The rest of the paper unfolds as follows.  The next section is devoted 
to the proof of (conditional) energy estimates for smooth enough solutions of (VNS). 
Then, in Section \ref{s:decay}, we establish the optimal decay estimates for $E_0$ and $E_1,$ 
and deduce the key Lipschitz control of the velocity field. 
 Section \ref{s:thm1} is devoted to completing the proof of Theorem \ref{thm:main1}
 while Section \ref{s:FK} deals with the proof of Theorem \ref{thm:main2} and Remark \ref{r:main2}. 
Some useful results concerning the properties of the flow associated to the transport equation
of (VNS) are postponed in Appendix \ref{s:AppendixA}  and, {for the convenience of the reader, some basic facts about Besov spaces, real interpolation and parabolic maximal regularity are listed in  Appendix \ref{s:AppendixB}.}
\medbreak
\noindent{\bf Notation:} For any normed space $X,$   index $q\in[1,\infty]$ and time $T\in[0,\infty],$ 
we denote $\|z\|_{L^q_T(X)}:= \bigl\| \|z(t)\|_X\|_{L^q(0,T)}$  and omit $T$ if it is $\infty.$
In the case where $z$ has several components  in $X,$ we  keep the same notation for the norm. 
It will be sometimes convenient to use the short notation $u_t$  (resp. $f_t$) to designate the time derivative
of $u$ (resp. $f$).

  Finally, the notation $C$ stands for harmless positive real numbers, and we shall often write
$A\lesssim B$ instead of $A\leq CB.$ To emphasize the dependency with respect to parameters $a_1,\cdots,a_n,$
we adopt the notation $C_{a_1,\cdots,a_n}.$


\section{Energy  estimates} \label{s:energy}

Throughout this section, we assume that we are given a smooth solution $(f,u,P)$ of (VNS) on the time interval $[0,T].$
{We fix some $R\geq1$ and $M>0$ such that
\begin{equation}\label{eq:R}
\rho(t,x)\leq R\andf m_2(t,x)\leq M\quad\hbox{on}\quad [0,T]\times\R^3.\end{equation}
The main goal of this section is to derive a priori estimates involving the energy functionals
\begin{equation}\label{eq:E}
\begin{aligned}
E_{0}&:= \frac 12\|u\|_{L^2(\R^3_x)}^2+\frac12 \||v|^2 f\|_{L^1(\R^3_x\times\R^3_v)},\\
E_{1}&:=\|\nabla u\|_{L^2(\R^3_x)}^2+\||u-v|^2 f\|_{L^1(\R^3_x\times\R^3_v)},\\
E_{2}&:=\|u_t\|_{L^2(\R^3_x)}^2+\||u-v|^2 f\|_{L^1(\R^3_x\times\R^3_v)}\end{aligned}\end{equation}
and the dissipation rates:
\begin{equation}\label{eq:D}\begin{aligned}
D_{0}&:= \|\nabla u\|_{L^2(\R^3_x)}^2+\||u-v|^2 f\|_{L^1(\R^3_x\times\R^3_v)},\\
D_{1}&:=\frac12\|\nabla u\|_{L^2(\R^3_x)}^2+\frac12\||u-v|^2 f\|_{L^1(\R^3_x\times\R^3_v)}+
\frac{1}{24R}\|(\nabla^2u,\nabla^2 P)\|_{L^2(\R^3_x)}^2,\\
D_{2}&:=\|\nabla u_t\|_{L^2(\R^3_x)}^2+\|\sqrt \rho\, u_t\|_{L^2(\R^3_x)}^2\end{aligned}\end{equation}
in terms of norms of the initial data, of $R$ and of the Lipschitz norm of $u.$
The results of this section are summarised in the following proposition:
\begin{proposition}   Let $(f,u,P)$ be a smooth solution of  (VNS) on the time interval $[0,T]$
satisfying \eqref{eq:R} and decaying to $0$ at infinity. Then, we have the following results:
\begin{itemize}
\item the basic energy balance \eqref{eq:energy} holds true on $[0,T];$\smallbreak
\item  there exists a universal constant $C$ such that 
 \begin{equation}\label{eq:H1b}
\frac d{dt}E_1 +  D_1\leq C\|\nabla u\|_{L^\infty}E_1+CR^3\|\nabla u\|_{L^2}^6;
\end{equation}
\item  there exists a universal constant $C$ such that 
  \begin{multline}\label{eq:tE2} \frac{d}{dt}(tE_2) +tD_2 + \int\!\!\!\int  |v-u|^2f \,dv\,dx\leq 24RD_1+2\bigl(1+M\bigr)tD_1
 \\ +C\|\rho\|_{L^\infty}^2 D_0(tE_1)^2 + C\|\rho\|_{L^\infty}\sqrt{RD_0D_1}\: tE_1
 +\bigl(2\|\nabla u\|_{L^\infty}+C\|\rho\|_{L^\infty}\sqrt{RD_0D_1}\bigr)tE_2.  
  \end{multline}
\end{itemize}
\end{proposition}}
\begin{proof}
The energy balance can be obtained by  
taking the $L^2(\R^3;\R^3)$ scalar product of the velocity equation of (VNS) with $u,$ 
integrating the equation of $f$ on $\R^3\times\R^3$ with respect to the measure $|v|^2\,dx\,dv$
and performing suitable integration by parts (see e.g. \cite{BDGM}). 
\smallbreak
In order to establish \eqref{eq:H1b}, the starting point is to take the $L^2(\R^3;\R^3)$ scalar product of the velocity equation of (VNS) with $u_t.$ 
This gives
\begin{equation}\label{eq:H1}\frac12\frac d{dt}\|\nabla u\|_{L^2}^2+\|u_t\|_{L^2}^2=-\int (u\cdot\nabla u)\cdot u_t\,dx +
\int\!\!\!\int u_t\cdot((v-u)f)\,dv\,dx.\end{equation}
The convection term may be handled  as follows  for all $\eps>0$:
\begin{align}\label{eq:conv}\nonumber
-\int (u\cdot\nabla u)\cdot u_t\,dx&\leq \|u\|_{L^6}\|\nabla u\|_{L^3}\|u_t\|_{L^2}\\
&\leq C\|\nabla u\|_{L^2}^{3/2}\|\nabla^2u\|_{L^2}^{1/2} \|u_t\|_{L^2}\\
&\leq\frac14\|u_t\|_{L^2}^2+\frac\eps2\|\nabla^2u\|_{L^2}^2 +\frac C\eps\|\nabla u\|_{L^2}^6.\nonumber\end{align}
To bound  the last term of \eqref{eq:H1}, we use the decomposition
$$\begin{aligned}
\int\!\!\!\int u_t\cdot((v-u)f)\,dv\,dx
&=- \frac12 \frac d{dt}\int\!\!\!\int|v-u|^2f\,dv\,dx
+\frac12\int\!\!\!\int |v-u|^2f_t\,dv\,dx\\
&= -\frac12 \frac d{dt}\int\!\!\!\int|v-u|^2f\,dv\,dx
-\frac12\int\!\!\!\int |v-u|^2 v\cdot\nabla_x f\,dv\,dx\\
&\hspace{3cm}+\frac12\int\!\!\!\int |v-u|^2 \divv{((v-u)f)}\,dv\,dx\\
&= -\frac12 \frac d{dt}\int\!\!\!\int|v-u|^2fdv\,dx
+\int\!\!\!\int {((v-u)f)}\cdot(v\cdot\nabla u)dv\,dx\\
&\hspace{6cm}-\int\!\!\!\int  |v-u|^2f \,dv\,dx.
\end{aligned}$$
Hence we have
\begin{multline}\label{eq:vuf}
\int\!\!\!\int u_t\cdot((v-u)f)\,dv\,dx
= -\frac12 \frac d{dt}\int\!\!\!\int|v-u|^2f\,dv\,dx
+\int\!\!\!\int {((v-u)f)}\!\cdot\!(u\cdot\nabla u)\,dv\,dx\\+\int\!\!\!\int {((v-u)f)}\cdot((v-u)\cdot\nabla u)\,dv\,dx-\int\!\!\!\int  |v-u|^2f \,dv\,dx.
\end{multline}
The first and  last terms  are parts of $E_1$ and of $D_1$ (see  \eqref{eq:E} and \eqref{eq:D}), respectively. 
For the third term, we have 
$$\int\!\!\!\int {((v-u)f)}\cdot((v-u)\cdot\nabla u)\,dv\,dx
\leq \|\nabla u\|_{L^\infty} E_1.$$
Finally, observe that by Cauchy-Schwarz inequality with respect to the $v$ variable, we have  
\begin{equation}\label{eq:L2}
\Bigl\|\int {(v-u)f}\,dv\Bigr\|_{L^2}^2\leq\|\rho\|_{L^\infty}\int\!\!\!\int f|v-u|^2\,dx\,dv.
\end{equation}
Hence
$$\begin{aligned}
\int\!\!\!\int  {((v-u)f)}\cdot(u\cdot\nabla u)\,dv\,dx&\leq \Big\|\int {(v-u)f}\,dv\Big\|_{L^2} \|u\|_{L^6}\|\nabla u\|_{L^3}\\
&\leq C\|\rho\|_{L^\infty}^{1/2}\biggl(\int\!\!\!\int |v-u|^2f\,dv\,dx\biggr)^{1/2}\|\nabla u\|_{L^2}^{3/2}\|\nabla^2 u\|_{L^2}^{1/2}\\
&\leq \frac\eps4\|\nabla^2u\|_{L^2}^2\!+\!\frac12\int\!\!\!\int \! |v-u|^2fdv\,dx+\frac C\eps\|\rho\|_{L^\infty}^2\|\nabla u\|_{L^2}^6.
\end{aligned}$$
Using the definition of $E_1$ and  \eqref{eq:R},   we conclude  that for all $\eps>0,$
\begin{multline}\label{eq:H10b}
\frac d{dt}E_1\!+\!\|u_t\|_{L^2}^2\!+\!\int\!\!\!\int |u-v|^2f\,dv\,dx\leq \eps\|\nabla^2u\|_{L^2}^2 + C\|\nabla u\|_{L^\infty}E_1+
C\eps^{-1}R^2\|\nabla u\|_{L^2}^6.
\end{multline}
To close the estimate, one has to bound $\|\nabla^2u\|_{L^2}.$ This may be done by using the 
elliptic regularity of the following Stokes system:
\begin{equation}\label{eq:stokes}-\Delta u+\nabla P=-u_t-u\cdot \nabla u+\int {(v-u)f}\,dv,\qquad \div u=0.\end{equation}
We have
$$\|\nabla^2u,\nabla P\|_{L^2}^2\leq 3\|u_t\|_{L^2}^2+ 3\|u\cdot\nabla u\|_{L^2}^2+3\Bigl\|\int {(v-u)f}\,dv\Bigr\|_{L^2}^2.$$
So, handling the convection term  as in \eqref{eq:conv}  and using \eqref{eq:L2}, we discover that
$$\|\nabla^2u,\nabla P\|_{L^2}^2\leq 3\|u_t\|_{L^2}^2+C\|\nabla u\|_{L^2}^3\|\nabla^2 u\|_{L^2}
+3 \|\rho\|_{L^\infty}\int\!\!\!\int |v-u|^2f\,dx\,dv,$$
which, leveraging Young inequality, leads to 
$$\|\nabla^2u,\nabla P\|_{L^2}^2\leq 6\|u_t\|_{L^2}^2+C\|\nabla u\|_{L^2}^6
+6 \|\rho\|_{L^\infty}\int\!\!\!\int |v-u|^2f\,dx\,dv.$$
{Keeping \eqref{eq:R} in mind,
and reverting  to \eqref{eq:H10b} with $\eps=R^{-1},$ we end up with  \eqref{eq:H1b}.}

\smallbreak
In order to establish \eqref{eq:tE2},  let us  differentiate the velocity equation with respect to time: 
\begin{align}u_{tt} +u\cdot\nabla u_t +\nabla P_t -\Delta u_t +\rho u_t + u_t\cdot\nabla u&=-\int(u-v)f_t\,dv\nonumber\\\label{eq:utt}
&= \int (u-v)(v\cdot\nabla_x f)\,dv +\int (v-u)f\,dv.\end{align}
 Taking the $L^2(\R^3;\R^3)$ scalar product with $t u_t$ yields
 \begin{multline*}\frac12\frac{d}{dt}\|\sqrt t\, u_t\|_{L^2}^2 + \|\sqrt t\, \nabla u_t\|_{L^2}^2 +\|\sqrt{\rho t}\, u_t\|_{L^2}^2 -
 \frac12\|u_t\|_{L^2}^2 \\ =- \int (tu_t)\cdot (u_t\cdot\nabla u)\,dx +\int\!\!\!\int (v\cdot\nabla_xf) (u-v)\cdot (tu_t)\,dv\,dx
 +\int\!\!\!\int (tu_t)\cdot (v-u)f\,dv\,dx.
 \end{multline*}
 In light of \eqref{eq:vuf}, we have 
 \begin{multline*}\int\!\!\!\int (tu_t)\cdot (v-u)f\,dv\,dx=
 -\frac12 \frac d{dt}\int\!\!\!\int  t|v-u|^2f\,dv\,dx
+\int\!\!\!\int t  {((v-u)f})\cdot\!(u\cdot\nabla u)\,dv\,dx\\+\int\!\!\!\int t  {(v-u)f} \cdot((v-u)\cdot\nabla u)\,dv\,dx
-\frac12\int\!\!\!\int f |v-u|^2 \,dv\,dx.\end{multline*}
 Therefore, setting 
\begin{equation}\label{eq:E2} E_2:=\|u_t\|_{L^2}^2 + \int\!\!\!\int f |v-u|^2 \,dv\,dx
   \andf D_2:= \| \nabla u_t\|_{L^2}^2 +\|\sqrt{\rho}\, u_t\|_{L^2}^2,\end{equation}
   we discover that 
    \begin{multline}\label{eq:E2t} \frac12\frac{d}{dt}(tE_2) +tD_2 +\frac12 \int\!\!\!\int f |v-u|^2 \,dv\,dx
    =  \frac12\|u_t\|_{L^2}^2 - \int (tu_t)\cdot (u_t\cdot\nabla u)\,dx \\+\int\!\!\!\int (v\cdot\nabla_xf) (u-v)\cdot (tu_t)\,dv\,dx
    \\+\int\!\!\!\int t {((v-u)f)}\cdot(u\cdot\nabla u)\,dv\,dx+\int\!\!\!\int t  {((v-u)f)}\cdot((v-u)\cdot\nabla u)\,dv\,dx.
    \end{multline}
 For the second term of the right-hand side, combining H\"older, Sobolev, Gagliardo-Nirenberg
 (to bound the $L^3$ norm) 
  and, eventually, Young inequalities gives
 $$ \begin{aligned} - \int (tu_t)\cdot (u_t\cdot\nabla u)\,dx&\leq \|\sqrt t\, u_t\|_{L^6}\|\sqrt t\, u_t\|_{L^2}\|\nabla u\|_{L^3}\\
 &\leq\frac14 \|\sqrt t\, \nabla u_t\|_{L^2}^2 
 + C \|\sqrt t\, u_t\|_{L^2}^2\|\nabla u\|_{L^2}\|\nabla^2 u\|_{L^2}.\end{aligned}$$
 To handle the third term, we integrate by parts and use that $v= u+(v-u),$ getting~:
 \begin{multline*} 
\int\!\!\!\int (v\cdot\nabla_xf) (u-v)\cdot (tu_t)\,dv\,dx 
=I_1+I_2+I_3:=\int\!\!\!\int t(v-u) v\cdot\nabla u_t\,f\,dv\,dx \\- \int\!\!\!\int tu_t\cdot(u\cdot\nabla u)\, f\,dv\,dx
+ \int\!\!\!\int tu_t\cdot((u-v)\cdot\nabla u)\, f\,dv\,dx.
 \end{multline*} 
 Combining Cauchy-Schwarz and Young inequalities, we easily get for all $\eps>0,$
 $$I_1\leq\frac\eps4 \|\sqrt{tm_2}\, \nabla u_t\|_{L^2}^2 + \frac1\eps \int\!\!\!\int tf|v-u|^2\,dv\,dx. $$
Next, we have 
$$\begin{aligned}
I_2&\leq \|\rho\|_{L^\infty}^{1/2} \|\sqrt{\rho t}\, u_t\|_{L^2} 
\|\nabla u\|_{L^3}\|\sqrt t u\|_{L^6}\\
&\leq \frac12 \|\sqrt{\rho t}\,u_t\|_{L^2}^2+Ct\|\rho\|_{L^\infty}\|\nabla u\|_{L^2}^3\|\nabla^2u\|_{L^2}\\
&\leq \frac12 \|\sqrt{\rho t}\,u_t\|_{L^2}^2+\frac12\|\nabla^2 u\|_{L^2}^2
+C\|\rho\|_{L^\infty}^2\|\nabla u\|_{L^2}^2\|\sqrt t\nabla u\|_{L^2}^4
\end{aligned}$$
and,  using  \eqref{eq:L2} and Gagliardo-Nirenberg inequality to bound the $L^3$ norm, 
$$\begin{aligned}
I_3&\leq \|\sqrt t u_t\|_{L^6} \|\nabla u\|_{L^3}\Bigl\|\sqrt t\int(v-u)f\Bigr\|_{L^2}\\
&\leq \frac14 \|\sqrt t\, \nabla u_t\|_{L^2}^2 + C\|\rho\|_{L^\infty}\|\nabla u\|_{L^2}\|\nabla^2u\|_{L^2}\int\!\!\!\int tf|v-u|^2\,dv\,dx. 
\end{aligned}
$$
Next, we write that
$$
\begin{aligned}\int\!\!\!\int \!t {((v-u)f)}\cdot(u\cdot\nabla u)\,dvdx&\leq \|\rho\|_{L^\infty}^{1/2}\biggl(\int\!\!\!\int\!t|v-u|^2f\,dvdx\biggr)^{\!\!1/2}
\|\sqrt t\,u\|_{L^6}\|\nabla u\|_{L^3}\\
\leq&\, C \|\rho\|_{L^\infty}^{1/2}\biggl(\int\!\!\!\int \!t|v-u|^2f\,dvdx\biggr)^{\!\!1/2}
\|\sqrt t\,\nabla u\|_{L^2}\|\nabla u\|_{L^2}^{1/2}\|\nabla^2u\|_{L^2}^{1/2}\\
\leq&\, \frac12\int\!\!\!\int\! t|v-u|^2f\,dvdx+C \|\rho\|_{L^\infty}\|\nabla u\|_{L^2}\|\nabla^2u\|_{L^2}
\|\sqrt t\,\nabla u\|_{L^2}^2.\end{aligned}$$
Finally,  it is obvious that
$$\int\!\!\!\int t {((v-u)f)}\cdot((v-u)\cdot\nabla u)\,dv\,dx\leq \|\nabla u\|_{L^\infty}\int\!\!\!\int t|v-u|^2f\,dv\,dx.$$
Therefore,  choosing $\eps =M^{-1},$ reverting to \eqref{eq:E2t} and using the definition of $D_i$ and $E_i$ for $i=0,1,2,$ 
we end up with \eqref{eq:tE2}.
  \end{proof}
 {The final goal of this section is to get a control on the energy functionals $E_0,$ $E_1$ and $tE_2$ on $[0,T]$ in terms of the 
  data, assuming in addition that the data are small enough, and that the Lipschitz condition \eqref{eq:Lip} is satisfied. 
  This is summarized in the following corollary:
  \begin{corollary}\label{c:final} Let  $(f,u,P)$ be a smooth solution of $(VNS)$ on the time interval $[0,T]$ satisfying the Lipschitz smallness condition \eqref{eq:Lip}. 
  Set $$\mathfrak f_0:=\|\langle v\rangle^2 f_0\|_{L^1_v(\R^3;L^\infty_x(\R^3))}
  \andf R_0:=\max(1,2\|f_0\|_{L^1_v(\R^3;L^\infty_x(\R^3))}).$$  
    There exist two   absolute constants $c_0$  and $C$  such that, if
 \begin{equation}\label{eq:smalldata1}
 \|u_0\|_{H^1}^2 + \int\!\!\!\int |v|^2 f_0\,dv\,dx\leq 
 \frac{c_0}{R_0^2}
 \end{equation}
  then, we have for all $t\in[0,T],$ 
\begin{equation}\label{eq:cE}
\cE(t)+\int_0^t\cD\,d\tau\leq C\bigl(1+\mathfrak f_0\bigr)\biggl(\|u_0\|_{H^1}^2 
+ \int\!\!\!\int |v|^2 f_0\,dv\,dx\biggr)
\end{equation}  
with, for all $t\in[0,T],$ 
\begin{equation}\label{eq:DE} \begin{aligned}\cE(t)&:=2(2+C\mathfrak f_0)(tE_1(t)+2E_0(t)) +25 R_0 E_1(t)+tE_2(t)\\
	\andf \cD(t)&:= 2(1+C\mathfrak f_0)D_0(t)+2 tD_1(t) +R_0D_1(t)+tD_2(t).\end{aligned}\end{equation}
 \end{corollary}
  \begin{proof}  For the time being,  take $M>0$ and $R\geq1$ such that \eqref{eq:R} holds on $[0,T],$ and  set 
	\begin{equation*} \begin{aligned}\cE(t)&:=2(2+M)(tE_1(t)+2E_0(t)) +25 R E_1(t)+tE_2(t)\\
	\andf \cD(t)&:= 2(1+M)D_0(t)+2 tD_1(t) +RD_1(t)+tD_2(t).\end{aligned}\end{equation*}
	Using a suitable linear combination  of   \eqref{eq:energy}, \eqref{eq:H1b} and \eqref{eq:tE2}, and observing that
	$$	\frac d{dt}(tE_1)=t\frac d{dt} E_1+D_0,$$ 
 we discover that
	\begin{equation}\label{eq:cEbis} \frac d{dt}\cE+\cD\leq C\bigl(U\cE+\cD \cE +R^2D_0\cE^2\bigr)\with U:=\|\nabla u\|_{L^\infty}.
	\end{equation}
	Consequently, as long as 
	\begin{equation}\label{eq:bootstrap}
	CR^2\cE\leq1/2,\end{equation} we  have 
	\begin{equation}\label{eq:ED} \frac d{dt}\cE+\frac12\cD\leq C\bigl(U +D_0\bigr)\cE\end{equation}
	and  Gronwall lemma  thus gives us, after a harmless change of $C,$
	\begin{equation}\label{eq:cE1}\cE(t)+\int_0^t\cD\,d\tau\leq C(R+M)(E_{0,0}+E_{1,0})e^{C\int_0^t (U+D_0)\,d\tau}.\end{equation}
Since  \eqref{eq:Lip} is satisfied,  Inequalities  \eqref{eq:boundrho} and  \eqref{eq:bounde} hold true, and one can thus choose $R=R_0$
 and $M=C\mathfrak f_0.$
Using also \eqref{eq:energyb},  Inequality \eqref{eq:cE1} finally implies that as long as \eqref{eq:bootstrap} holds, we have
\begin{equation}\label{eq:cE2}\cE(t)+\int_0^t\cD\,d\tau\leq C  \bigl(1+\mathfrak f_0\bigr)
e^{CE_{0,0}}.\end{equation}
Leveraging a bootstrap argument, we conclude that \eqref{eq:bootstrap} is satisfied if
$R_0^2\bigl(E_{0,0}+E_{1,0}\bigr)e^{CE_{0,0}}$ is small enough.
Since we have $e^{CE_{0,0}}\leq2$ if $E_{0,0}$ is sufficiently small and, clearly, 
$$E_{0,0}+E_{1,0}\simeq \|u_0\|_{H^1}^2 + \int\!\!\!\int |v|^2 f_0\,dv\,dx,$$
this gives   the desired statement. 
\end{proof}}


\section{Decay estimates and control of the Lipschitz norm of the velocity}\label{s:decay}

The first goal of the present section is to establish optimal time decay for the energy functionals $E_0$ and $E_1$
associated to  global  solutions to (VNS), \emph{under the Lipschitz condition \eqref{eq:Lip}}. 
Secondly, we will combine these estimates with those from the previous section and interpolation arguments to exhibit a sufficient condition on the initial data to ensure \eqref{eq:Lip}.

\subsection{Decay estimates for the energy functionals} 

The main goal of this part is to establish the following statement: 
{\begin{proposition} Let $(f,u,P)$ be a smooth solution of (VNS) on $[0,T],$ satisfying \eqref{eq:Lip}. 
There exist two positive real numbers $a_0$ and $a_1$ depending only on  $M_0,$ $E_{0,0},$ $E_{1,0},$ $\|u_0\|_{\dot B^{-\frac32}_{2,\infty}}$
 and $R$  such that
if $E_{0,0}$ and $E_{1,0}$ are small enough then,  we have for all $t\in[0,T]$:
\begin{equation}\label{eq:EEE}E_0(t)\leq CE_{0,0}(1+a_0t)^{-3/2}\andf  E_1(t)\leq CE_{1,0} (1+a_1t)^{-5/2}.\end{equation}
Furthermore, we have for all $\alpha\in (0,3/2)$:
\begin{align}\label{eq:DDD0}
\bigl(\textstyle{\frac 32}-\alpha\bigr)\Int_0^t(1+a_0\tau)^{\alpha}D_0(\tau)\,d\tau&\leq CE_{0,0}\\\andf\label{eq:DDD1}
\bigl(\textstyle{\frac 32}-\alpha\bigr)\Int_0^t(1+a_1\tau)^{\alpha+1}D_1(\tau)\,d\tau&\leq C(E_{0,0}+E_{1,0}).
\end{align}\end{proposition}}
\begin{proof}
The proof is based  on Inequality \eqref{eq:lyap2} with 
$\cL$ (resp. $\cD$) being either $E_0$ or $E_1$ (resp. $D_0$ or $D_1$), and $\cN=\|u\|_{\dot B^{-3/2}_{2,\infty}}.$ 
The heuristics for $E_1$ relies on the fact that, roughly, for small solutions, 
we have $E_1\simeq \|\nabla u\|_{L^2}^2$ and $D_1\simeq \|\nabla^2 u\|_{L^2}^2,$ and that 
 the following interpolation inequality (that stems from  \eqref{eq:equivHs}, \eqref{eq:equivk} and \eqref{eq:interpo3}) holds true for all $\sigma>-1$:
\begin{equation}\label{eq:interpo2}
\|\nabla u\|_{L^2}\lesssim \|\nabla^2u\|_{L^2}^{\theta}\|u\|_{\dot B^{-\sigma}_{2,\infty}}^{1-\theta},
\with\theta=\frac{\sigma+1}{\sigma+2}\cdotp
\end{equation}
Provided it can be shown that we indeed have some control on $\|u\|_{\dot B^{-3/2}_{2,\infty}},$
applying Inequality \eqref{eq:lyap2} will give exactly the right decay exponent for $E_1.$
A similar heuristics applies to  $E_0.$ 
\smallbreak
Proving time-independent estimates for $\|u\|_{\dot B^{-3/2}_{2,\infty}}$  revolves on maximal regularity estimates for 
the heat equation, once having noticed that, denoting by $\cP$ the Leray projector on divergence-free vector-fields, we have
\begin{equation}\label{eq:mild}\partial_tu-\Delta u= -\cP\div(u\cdot\nabla u) + \cP\biggl(\Int  {(v-u)f}\,dv\biggr)\cdotp\end{equation}
Indeed, {owing to \eqref{eq:cP} and  Proposition \ref{p:maxreg}, we have}
\begin{equation}\label{eq:3/2}\|u\|_{L^\infty_t(\dot B^{-\frac32}_{2,\infty})} + \|u\|_{\wt L_t^1({\dot B^{\frac12}_{2,\infty}})}\leq  \|u_0\|_{\dot B^{-\frac32}_{2,\infty}}
+\|u\cdot\nabla u\|_{\wt L_t^1(\dot B^{-\frac32}_{2,\infty})} 
+ \biggl\|\Int  {(u-v)f}\,dv\biggr\|_{\wt L_t^1(\dot B^{-\frac32}_{2,\infty})}.\end{equation}
Combining with the chain of embeddings (see  \eqref{eq:embed} and \eqref{eq:Minko}):
\begin{equation}\label{eq:chain}L^1(0,t\times\R^3)\hookrightarrow L^1(0,t;\dot B^{-\frac32}_{2,\infty}(\R^3))
\hookrightarrow \wt L^1_t(\dot B^{-\frac32}_{2,\infty}(\R^3)),\end{equation}
we thus have 
\begin{equation}\label{eq:3/2b}\|u\|_{L^\infty_t(\dot B^{-\frac32}_{2,\infty})} + \|u\|_{\wt L_t^1({\dot B^{\frac12}_{2,\infty}})}\leq  \|u_0\|_{\dot B^{-\frac32}_{2,\infty}}
+\|u\cdot\nabla u\|_{L^1(0,t\times\R^3)}+\biggl\|\Int  {(u-v)f}\,dv\biggr\|_{L^1(0,t\times\R^3)}.\end{equation}
Having time-independent control of the left-hand side is fundamental in Nash' argument. 
However,  at this stage, it is not clear how to control the Brinkman term in $L^1(\R_+\times\R^3).$ 
The only information that we one can get relatively easily is a control in $L^2(\R_+;L^1(\R^3)).$
Using again   Proposition \ref{p:maxreg} and the above chain embedding (with $L^2$-in-time instead of $L^1$), 
this will provide us with a control on the $\dot B^{-1/2}_{2,\infty}$ norm and then, with decay $t^{-3/2}$ for $E_1.$
This decay  turns  out to be fast  enough to  control the Brinkman term in $L^1(\R_+\times\R^3),$ and to go back to the propagation 
of the   regularity $\dot B^{-3/2}_{2,\infty}.$
\smallbreak
{This motivates the following structure of the proof:
\begin{itemize}
\item[Step 1.] Propagating regularity $\dot B^{-1/2}_{2,\infty}.$
\item[Step 2.] Proving decay $t^{-3/2}$ for $E_1.$
\item[Step 3.] Propagating  regularity $\dot B^{-3/2}_{2,\infty}.$
\item[Step 4.] Proving optimal decay for $E_0$ and $E_1.$
\end{itemize}}


\subsubsection*{Step 1: Propagation of regularity $\dot B^{-1/2}_{2,\infty}$}

Applying Proposition \ref{p:maxreg} with $s=-1/2,$ $\rho_2=2,$ $p=2,$ $r=\infty$ and $\rho_1\in\{2,\infty\}$  to \eqref{eq:mild} gives  for all $t\in[0,T],$ 
$$\|u(t)\|_{\dot B^{-\frac12}_{2,\infty}} + \|u\|_{\wt L_t^2({\dot B^{\frac12}_{2,\infty}})}\leq  \|u_0\|_{\dot B^{-\frac12}_{2,\infty}}
+\|u\cdot\nabla u\|_{\wt L_t^2(\dot B^{-\frac32}_{2,\infty})}+\biggl\|\int {(v-u)f}\,dv\biggr\|_{\wt L_t^2(\dot B^{-\frac32}_{2,\infty})}.$$
Hence, combining with  the following embeddings that follow from \eqref{eq:embed} and \eqref{eq:Minko}, 
\begin{equation}\label{eq:tilde}L^2(0,t;L^1(\R^3))\hookrightarrow L^2(0,t;\dot B^{-\frac32}_{2,\infty}(\R^3))
\hookrightarrow \wt L^2_t(\dot B^{-\frac32}_{2,\infty}(\R^3)),\end{equation}
  we have 
  $$\|u(t)\|_{\dot B^{-\frac12}_{2,\infty}} + \|u\|_{\wt L_t^2({\dot B^{\frac12}_{2,\infty}})}\leq  \|u_0\|_{\dot B^{-\frac12}_{2,\infty}}
  +C\|u\cdot\nabla u\|_{L^2_t(L^1)}+C \biggl\|\Int  {(v-u)f}\,dv\biggr\|_{L^2_t(L^1)}\cdotp
  $$
  The last term may be bounded by means of Cauchy-Schwarz inequality as follows:
\begin{equation}\label{eq:L2L1}
\int\biggl|\int  {(v-u)f}\,dv\biggl|dx\leq \biggl(\int\!\!\!\int f\,dv\,dx\biggr)^{1/2}
\biggl(\int\!\!\!\int |v-u|^2f\,dv\,dx\biggr)^{1/2}.\end{equation}
Hence,  using  the relations  \eqref{eq:energy0} and \eqref{eq:M0},  we discover that
$$\biggl\|\Int {(v-u)f}\,dv\biggr\|_{L_t^2(L^1)}\leq C\sqrt{M_0E_{0,0}}\,.$$
Furthermore,  \eqref{eq:tilde}, Cauchy-Schwarz  inequality and \eqref{eq:energy0}   yield 
$$\|u\cdot\nabla u\|_{\wt L_t^2(\dot B^{-\frac32}_{2,\infty})}\lesssim
\|u\cdot\nabla u\|_{L^2_t(L^1)}\leq \|u\|_{L^\infty_t(L^2)}\|\nabla u\|_{L^2_t(L^2)}\leq \sqrt 2\,E_{0,0}.$$
In conclusion, we  have for all $t\in[0,T],$
\begin{equation}\label{eq:negative}
\|u\|_{L^\infty_t(\dot B^{-\frac12}_{2,\infty})} + \|u\|_{\wt L^2_t({\dot B^{\frac12}_{2,\infty}})}\leq  \|u_0\|_{\dot B^{-\frac12}_{2,\infty}}
+ C\sqrt{M_0E_{0,0}} + CE_{0,0}=:C_0.\end{equation}

\subsubsection*{Step 2: Proving decay $t^{-3/2}$ for $E_1$}

From \eqref{eq:interpo2} and \eqref{eq:negative}, we infer that 
$$ \|\nabla^2 u\|_{L^2}^2\gtrsim C_0^{-4/3}\bigl(\|\nabla u\|_{L^2}^2\bigr)^{5/3}.$$
Therefore,  if we set $$U:=\|\nabla u\|_{L^\infty},\quad\!\!
X_1:=\|\nabla u\|_{L^2}^2\andf Y_1:=\int\!\!\!\int |u-v|^2f\,dx\,dv,$$  and use  \eqref{eq:H1b} with $R=\max(1,2\|f_0\|_{L^1_v(\R^3;L^\infty_x(\R^3))})$, we end up 
with 
$$ \frac d{dt} E_1+2c'_0X_1^{5/3} + Y_1\leq CU E_1 +CR^3X_1^3\with c'_0:=C_0^{-4/3}/8.$$ 
So,  assuming the following a priori bound:
\begin{equation}\label{eq:apriori}
 CR^3X_1^{4/3}\leq c'_0, \end{equation}
we get
\begin{equation}\label{eq:wtE_1}
\frac d{dt}\wt E_1 + c'_0 \wt X_1^{5/3} + \wt Y_1\leq 0\with 
\wt Z(t):=e^{-C\int_0^t U(\tau)\,d\tau} Z(t)\quad\hbox{for }\ Z\in\{E_1,X_1,Y_1\}.\end{equation}
Note that 
$$\wt Y_1\leq \wt E_1\leq E_{1,0}.$$
Since we restricted our attention to the case of small data (meaning in particular
that $E_{1,0}$ is small), one may assume with no loss of generality that $\wt Y_1\geq c'_0 \wt Y_1^{5/3},$
and thus Inequality \eqref{eq:wtE_1} gives after a suitable harmless change of $c'_0$:
$$\frac d{dt}\wt E_1 + c'_0 \wt E_1^{5/3} \leq 0.$$
Hence, provided  \eqref{eq:Lip} holds, we have 
 \begin{equation}\label{eq:E1c} E_1(t)\leq  CE_{1,0} (1+a_1t)^{-3/2}\end{equation} with  $a_1$ 
depending only on the initial data, on  $M_0,$ $E_{0,0}$  and $\|u_0\|_{\dot B^{-1/2}_{2,\infty}}.$ 
\smallbreak
{Combining the above inequality with   \eqref{eq:H1b}  will enable us to exhibit some extra integrability for $D_1$. }
Indeed,  assuming that 
\begin{equation}\label{eq:smallness1}
CR^3E_1\leq1,\end{equation}
Inequality   \eqref{eq:H1b}  implies  that
\begin{equation}\label{eq:E1b}
 \frac d{dt}E_1 + D_1\leq C(U+D_0) E_1. \end{equation}
 Let us set  for some $\beta\in(0,3/2),$ 
$$\check E_1(t):=(1+a_1t)^{\beta}  e^{-C\int_0^t (U+D_0)\,d\tau}  E_1(t)\andf \check D_1(t):=(1+a_1t)^{\beta}  e^{-C\int_0^t (U+D_0)\,d\tau}  D_1(t).$$
Then \eqref{eq:E1b} implies that 
$$\frac d{dt}\check E_1+ \check D_1 \leq \beta a_1(1+a_1t)^{\beta-1}\wt E_1.$$
Hence, using the bound \eqref{eq:E1c} and observing that $t\mapsto (1+a_1t)^{\beta-5/2}$ is integrable on $\R_+,$ 
we end up with\footnote{Note that, if taking $\beta=3/2,$ then one gets
$\Int_0^t  (1\!+\!a_1\tau)^{3/2}  \wt D_1(\tau)\,d\tau \leq 2\bigl(1+\log(1\!+\!a_1t)\bigr){E_{1,0}}.$}
$$ \int_0^t  (1+a_1\tau)^\beta  \wt D_1(\tau)\,d\tau \leq \frac{2\beta}{3/2\,-\,\beta}\: E_{1,0}{\with 
\wt D_1(t):=e^{-C\int_0^t U(\tau)\,d\tau} D_1(t).}$$
 Therefore, assuming \eqref{eq:Lip} and \eqref{eq:smallness1} gives 
  \begin{equation}\label{eq:E1d}
 (3/2-\beta) \int_0^t  (1+a_1\tau)^\beta   D_1(\tau)\,d\tau \leq  C \,e^{CE_{0,0}} \:E_{1,0},\qquad \beta\in(0,3/2).
 \end{equation}

\subsubsection*{Step 3. Propagating  regularity $\dot B^{-3/2}_{2,\infty}$}

Leveraging  \eqref{eq:E1d} with e.g.  $\beta=5/4$ ensures that the Brinkman force belongs to $L^1(\R_+\times\R^3).$ 
Indeed, remembering \eqref{eq:L2L1} and using Cauchy-Schwarz inequality, one can write that 
\begin{equation}\label{eq:L1}
\int_{\R_+} \int \biggl|\int  {(u-v)f}\,dv\,\biggl| dxdt \leq  \sqrt{M_0} \int_{\R_+} \!\sqrt{ (1+a_1t)^{5/4}\, D_1(t)}\: (1+a_1t)^{-5/8}\,dt \leq C_{0}.\end{equation}
This additional information will enable us to control  $\|u(t)\|_{\dot B^{-\frac32}_{2,\infty}}$
through Inequality \eqref{eq:3/2b}.
Indeed, as a first, we observe that 
$$\|u\cdot\nabla u\|_{L^1_t(L^1)}\leq \|u\|_{L^2_t(L^2)}\|\nabla u\|_{L^2_t(L^2)}$$
and that (see just below \eqref{eq:interpo3}):
\begin{equation}\label{eq:interpo4}\|u\|_{L^2_t(L^2)}\lesssim \|u\|_{\wt L^2_t(\dot B^{-\frac12}_{2,\infty})}^{2/3}\|\nabla u\|_{L^2_t(L^2)}^{1/3}
\andf \|u\|_{\wt L^2_t(\dot B^{-\frac12}_{2,\infty})}\leq  \|u\|_{L^\infty_t(\dot B^{-\frac32}_{2,\infty})}^{1/2} \|u\|_{\wt L^1_t(\dot B^{\frac12}_{2,\infty})}^{1/2}.
\end{equation}
Hence, 
$$\|u\cdot\nabla u\|_{L^1_t(L^1)}\lesssim \|\nabla u\|_{L^2_t(L^2)}^{4/3}
  \|u\|_{L^\infty_t(\dot B^{-\frac32}_{2,\infty})}^{1/3} \|u\|_{\wt L^1_t(\dot B^{\frac12}_{2,\infty})}^{1/3}.$$
  Hence, owing to \eqref{eq:energy} and \eqref{eq:L1}, we have
  $$\|u\|_{L^\infty_t(\dot B^{-\frac32}_{2,\infty})\cap \wt L_t^1({\dot B^{\frac12}_{2,\infty}})}\leq  \|u_0\|_{\dot B^{-\frac32}_{2,\infty}}
+CE_{0,0}^{2/3}\|u\|_{L^\infty_t(\dot B^{-\frac32}_{2,\infty})\cap \wt L_t^1({\dot B^{\frac12}_{2,\infty}})}^{2/3}+C_0.$$  
Consequently, using Young inequality and \eqref{eq:energy}, 
one can conclude that there exists $C'_0$ depending only on $E_{0,0},$  $M_0$ and  $\|u_0\|_{\dot B^{-\frac32}_{2,\infty}}$
such that if  $E_{1,0}$ is small enough 
and \eqref{eq:Lip} is satisfied, then we have
\begin{equation}\label{eq:negative1}
\|u\|_{L^\infty_t(\dot B^{-\frac32}_{2,\infty})} + \|u\|_{\wt L^1_t({\dot B^{\frac12}_{2,\infty}})}\leq  C'_0,\qquad t\in[0,T].\end{equation}

\subsubsection*{Step 4. Optimal decay for $E_0$ and $E_1$}

From \eqref{eq:equivHs}, \eqref{eq:equivk} and \eqref{eq:interpo3}, we deduce that
\begin{equation}\label{eq:interpopo}\|u\|_{L^2}\lesssim \|\nabla u\|_{L^2}^{3/5} \|u\|_{\dot B^{-\frac32}_{2,\infty}}^{2/5}.\end{equation}
Inequality  \eqref{eq:negative1} thus ensures  that for some $C_0$ depending only on  $E_{0,0},$  $M_0$ and  $\|u_0\|_{\dot B^{-\frac32}_{2,\infty}},$
$$\|\nabla u\|_{L^2}^2\geq C_0\bigl(\|u\|_{L^2}^2\bigr)^{5/3}.$$
At this stage, one can almost mimic the proof of \eqref{eq:E1c} and \eqref{eq:E1d}.  {However, one has to pay attention 
to the fact that, in contrast with $E_1$ and $D_1,$ the kinetic parts of $E_0$ and $D_0$ are not equal.
Now, in light of H\"older and Young inequalities,  and  embedding $\dot H^1(\R^3)\hookrightarrow L^6(\R^3),$ we get
$$\begin{aligned}
\int\!\!\!\int f|u-v|^2\,dx\,dv&=\int\!\!\!\int f|u|^2\,dx\,dv+\int\!\!\!\int f|v|^2\,dx\,dv -2\int\!\!\!\int f u\cdot v\,dx\,dv\\
&\geq \frac12\int\!\!\!\int f|v|^2\,dx\,dv-\int\rho|u|^2\,dx\\
&\geq \frac12\int\!\!\!\int f|v|^2\,dx\,dv-\|\rho\|_{L^{3/2}}\|u\|_{L^6}^2\\
&\geq \frac12\int\!\!\!\int f|v|^2\,dx\,dv-CM_0^{2/3}R^{1/3}\|\nabla u\|_{L^2}^2.
\end{aligned}$$
Combining with the obvious inequality (valid for all $\eps\in[0,1]$): 
$$\int\!\!\!\int f|u-v|^2\,dx\,dv\geq \eps\int\!\!\!\int f|u-v|^2\,dx\,dv\geq \frac\eps2\int\!\!\!\int f|v|^2\,dx\,dv
-C\frac\eps2 M_0^{2/3}R^{1/3}\|\nabla u\|_{L^2}^2$$
and taking $\eps=\min(1,1/(CM_0^{2/3}R^{1/3})),$ we get 
$$D_0\geq \frac12 \|\nabla u\|_{L^2}^2 + \frac\eps2\||v|^2f\|_{L^1(\R^3_x\times\R^3_v)}.$$
Arguing as in Step 2, it is now easy to conclude to the second part of \eqref{eq:EEE} and to \eqref{eq:DDD0}.}
\smallbreak
Finally, in order to improve the decay of $E_1$ and $D_1 $ by an increment $1,$  it suffices to use  \eqref{eq:interpo2}
with $\sigma=3/2$ 
which, owing to \eqref{eq:negative1}, gives 
$$\|\nabla^2 u\|_{L^2}^2\gtrsim C_0^{-4/5} \bigl(\|\nabla u\|_{L^2}^2\bigr)^{7/5}.$$
Hence, we have for a new constant $c_0$ depending only on $\|u_0\|_{\dot B^{-3/2}_{2,\infty}},$ $E_{0,0}$  and $E_{1,0},$ 
$$\frac d{dt} E_1+2c_0X_1^{7/5}+Y_1\leq CU E_1 +CR^3X_1^3.$$  
Defining $\wt E_1,$ $\wt X_1$ and $\wt Y_1$ as in \eqref{eq:wtE_1}, we discover that whenever $CR^3 X_1^{8/5} \leq c_0,$ we have
$$\frac d{dt} \wt E_1+2c_0\wt X_1^{7/5}+\wt Y_1\leq 0,$$
which leads for all $\beta\in(1,5/2)$ to   \eqref{eq:DDD1}. 
\end{proof}

	
	\subsection{Getting the Lipschitz bound} Our aim here is to establish that \eqref{eq:Lip} is satisfied 
	uniformly with respect to $T>0$
	if  $c_0$ in \eqref{eq:smalldata} is small enough. 
		{To do this, it is convenient to take advantage of the  embedding:
	    \begin{equation}\label{eq:GN} \|z\|_{L^\infty}\lesssim \|\nabla  z\|_{L^{3,1}},   \end{equation}
where the Lorentz space $L^{3,1}$ is defined by real interpolation from Lebesgue spaces as follows (see \eqref{eq:lorentz}): 
\begin{equation}\label{eq:L31}
L^{3,1}:=[L^2,L^\infty]_{1/3,1}.\end{equation}
Recall that for the Stokes system 
$$-\Delta z+\nabla Q=g\andf \div z=0\quad\hbox{in } \  \R^3,$$
we have the following well-known property  of regularity for all $p\in(1,\infty)$:
$$\|\nabla^2 z\|_{L^p}\leq C_p\|g\|_{L^p}.$$
Consequently, leveraging  Proposition \ref{p:interpo},  we deduce that
$$\|\nabla^2 z\|_{L^{3,1}}\leq C_p\|g\|_{L^{3,1}}.$$
Applying this to  \eqref{eq:stokes} yields 
   $$\|\nabla^2u\|_{L^{3,1}}\lesssim \|u_t\|_{L^{3,1}} + \|u\cdot\nabla u\|_{L^{3,1}} + \Bigl\|\int  {(v-u)f}\,dv\Bigr\|_{L^{3,1}}.$$}
   Therefore, using \eqref{eq:GN} guarantees that  for all $T>0,$
      \begin{equation}\label{eq:Lip0}
   \int_0^T\|\nabla u\|_{L^\infty}\,dt \lesssim \int_0^T \biggl(\|u_t\|_{L^{3,1}} + \|u\cdot\nabla u\|_{L^{3,1}} + \Bigl\|\int  {(v-u)f}\,dv\Bigr\|_{L^{3,1}}\biggr)dt.\end{equation} 
   On the one hand,  using \eqref{eq:reiteration} then  
    $\dot H^1\hookrightarrow L^6,$ we discover that
\begin{equation}\label{eq:L4}
	 \|u_t\|_{L^{3,1}} \lesssim \|u_t\|_{L^2}^{1/2}\|u_t\|_{L^6}^{1/2}\lesssim
	 \|u_t\|_{L^2}^{1/2}\|\nabla u_t\|^{1/2}_{L^2}.\end{equation}
On the other hand, from  H\"older inequality, \eqref{eq:GN} and the same argument as for $u_t,$  one can write that
 $$
 \|u\cdot\nabla u\|_{L^{3,1}} \leq \|u\|_{L^\infty}\|\nabla u\|_{L^{3,1}}\lesssim \|\nabla u\|_{L^{3,1}}^2\lesssim \|\nabla u\|_{L^2}\|\nabla^2u\|_{L^2}.
 $$
Finally, using again \eqref{eq:reiteration}, then 
 \eqref{eq:L2}  yields
$$\begin{aligned} \Bigl\|\int  {(v-u)f}\,dv\Bigr\|_{L^{3,1}}&\lesssim   \Bigl\|\int {(v-u)f}\,dv\Bigr\|_{L^2}^{2/3}\Bigl\|\int  {(v-u)f}\,dv\Bigr\|_{L^\infty}^{1/3}\\
 &\lesssim  \|\rho\|_{L^\infty}^{1/3}\biggl(\int\!\!\!\int |v-u|^2f\,dv\,dx\biggr)^{1/3} \bigl(\|j\|_{L^\infty} 
 + \|\rho\|_{L^\infty}\|u\|_{L^\infty}\bigr)^{1/3}\cdotp\end{aligned}$$
 Hence, since
    $$\|u\|_{L^\infty}\lesssim \|\nabla u\|_{L^2}^{1/2}\|\nabla^2 u\|_{L^2}^{1/2},  $$
 we obtain 
 $$\Bigl\|\int  {(v-u)f}\,dv\Bigr\|_{L^{3,1}}
  \leq \|\rho\|_{L^\infty}^{1/3}\biggl(\int\!\!\!\int |v-u|^2f\,dv\,dx\biggr)^{1/3} \bigl(\|j\|_{L^\infty}^{1/3} \!+\! \|\rho\|_{L^\infty}^{1/3}
  \|\nabla u\|_{L^2}^{1/6}  \|\nabla^2 u\|_{L^2}^{1/6}\bigr)\cdotp$$
  Whenever the small Lipschitz norm condition \eqref{eq:Lip} is satisfied and $E_{0,0}$ is small enough, Inequalities \eqref{eq:boundrho}
  and \eqref{eq:bounde} give us
  $$\|\rho\|_{L^\infty}+\|j\|_{L^\infty}\leq C  \|\langle v\rangle ^2 f_0\|_{L^1(\R^3_v;L^\infty(\R^3_x))}.$$
  To complete the proof, let us define $T$ to be the largest time for which \eqref{eq:Lip} holds true. 
  Reverting to \eqref{eq:Lip0} and remembering the definition of $D_0,$ $D_1$ and $D_2,$  we discover that 
   we have    for some constant $C_R$ depending only on  $\|\langle v\rangle ^2 f_0\|_{L^1(\R^3_v;L^\infty(\R^3_x))},$ 
  $$  \int_0^T\|\nabla u\|_{L^\infty}\,dt \leq C_R\int_0^T \bigl(D_1^{1/4}D_2^{1/4} 
  + D_0^{1/2}D_1^{1/2} +  
   D_1^{1/3}  + D_0^{1/12} D_1^{5/12}\bigr)dt.$$  
   Let us take $\alpha$ close enough to $3/2$   in \eqref{eq:DDD0} and \eqref{eq:DDD1},  and observe  that the above inequality 
  may be rewritten
   \begin{multline*}  \int_0^T\|\nabla u\|_{L^\infty}\,dt \leq C_R
   \int_0^T \bigl(t^{-1/4}(1\!+\!a_0t)^{-(\alpha+1)/4}   ((1\!+\!a_0t)^{\alpha+1}D_1)^{1/4}(tD_2)^{1/4} + D_0^{1/2}D_1^{1/2}\\
   +(1\!+\!a_0t)^{-(\alpha+1)/3} ((1\!+\!a_0t)^{\alpha+1} D_1)^{1/3}
  +(1\!+\!a_0t)^{-\frac{6\alpha+5}{12}}((1\!+\!a_0t)^\alpha D_0)^{1/12} 
  ((1\!+\!a_0t)^{\alpha+1} D_1)^{5/12}\bigr)dt.  \end{multline*}
 Using H\"older inequality, Inequality \eqref{eq:cE} and the decay estimates \eqref{eq:DDD0} and \eqref{eq:DDD1}, 
 one can conclude that the right-hand side may be bounded in terms of the data 
 if we take\footnote{In accordance with Remark \ref{r:1}, if assuming that $u_0\in\dot B^{-\sigma}_{2,\infty},$
  the same approach would give  $\alpha=\sigma,$ whence  the restriction $\sigma>1.$}   $\alpha\in(1,3/2).$  This gives 
 $$\int_0^T\|\nabla u\|_{L^\infty}\leq C_0$$ with $C_0$ depending only 
 on $R$ and on the norms of the initial data and tending to $0$ when $E_{0,0}+E_{1,0}$ tends to $0.$ 


\section{Proving theorem \ref{thm:main1}} \label{s:thm1}

This section is devoted to the proof of our first main result.
We shall first present a stability result that will entail the uniqueness part of   Theorem \ref{thm:main1}, and will be also used in 
the subsection dedicated to the existence.  The last subsection is devoted to the study 
of the large time behavior of the distribution function.

	\subsection{A stability result}\label{ss:stab} 

Since the result we have in mind will be useful for both the proof of uniqueness and strong convergence of the solutions 
to some approximate (VNS) system (see the next part),  it is convenient to consider 
 the following linear system:
 \begin{equation}\label{eq:VNS}\left\{\begin{aligned}&\d_tf+v\cdot\nabla_xf+\divv\bigl({(w-v)f}\bigr)=0,\\
&\d_tu-\Delta u+ w\cdot\nabla u+\int {(w-v)f}\,dv+\nabla P= S,\\
&\div v=0,\end{aligned}\right.\end{equation}
where the transport field $w$ and the source term $S$ are given sufficiently smooth time-dependent vector-fields. 
\smallbreak
Our goal is to prove a stability estimate for \eqref{eq:VNS}, that implies
a uniqueness result for (VNS) in the functional framework of Theorem \ref{thm:main1}. To do this, 
we  adapt the approach  of \cite{HMMM} . Let us underline that the proof here is simpler since
the characteristics associated to the first equation of \eqref{eq:VNS} are Lipschitz. 

So, let us consider two solutions $u_1$ and $u_2$ of \eqref{eq:VNS} associated with vector-fields 
$w_1$ and $w_2,$ source terms $S_1$ and $S_2,$ and  initial velocities $u_{0,1}$ and $u_{0,2}$.
For simplicity,  we take the same  initial distribution function $f_0$  for the two solutions.  
 
  Let us set $\df:=f_2-f_1,$ $\du:=u_2-u_1,$ $\dw:=w_2-w_1$   and $\dS:=S_2-S_1.$  
We denote by $\rho_i$ and $m_{2,i}$ the density (resp. energy) associated with $f_i$ through \eqref{eq:not}.
We also introduce the characteristic curves $Z_1=(X_1,V_1)$ and $Z_2=(X_2,V_2)$ associated with vector-fields  $w_1$ and $w_2,$ respectively,
in System \eqref{eq:car}. We use the notation $Y_j(t,x,v):= Y_j(t;0,x,v)$ for $Y\in\{V,X,Z\}.$ Finally, we  set:
$$\dX:=X_2-X_1,\quad \dV:=V_2-V_1\andf \dZ:=Z_2-Z_1.$$ 
Still denoting by $\cP$ the Leray projector, the system satisfied by $\du$ reads:
\begin{equation}\label{eq:du}\left\{\begin{array}{l}
\d_t\du+w_1\cdot\nabla\du-\Delta\du+\rho_2\dw =\cP\biggl(\dS-\dw\cdot\nabla u_2+\Int {(v-w_1)\df}\,dv\biggr),\\[1ex]
\div\du =0.\end{array}\right.\end{equation}
Taking the $L^2(\R^3;\R^3)$ scalar product with $\du$ immediately gives
\begin{multline}\label{eq:du0}
\frac12\frac d{dt}\|\du\|_{L^2}^2+\|\nabla\du\|_{L^2}^2 +\|\sqrt{\rho_2}\,\du\|_{L^2}^2\\=-\int(\dw\cdot\nabla u_2)\cdot \du\,dx
+\int\!\!\!\int {((v-w_1)\df)}\cdot\du\,dv\,dx+\int \du\cdot\dS\,dx.\end{multline}
For the first term of the right-hand side, combining H\"older, Sobolev  and Young inequalities yields
\begin{equation}\label{eq:uniq1}
-\int(\dw\cdot\nabla u_2)\cdot \du\,dx\leq\frac1{10}\|\nabla\du\|_{L^2}^2+C\|\nabla u_2\|_{L^3}^2\|\dw\|_{L^2}^2.
\end{equation}
To handle the second term (let us call it $\dF$), using Formula \eqref{eq:formularho} for $f_1$ and $f_2$ gives
\begin{equation}\label{eq:dF}
\dF(t)
=e^{3t}\int\!\!\!\int\bigl(f_0(Z_2^{-1}(t))-f_0(Z_1^{-1}(t))\bigr)(v-w_1)\cdot \du\,dv\,dx.\end{equation}
Performing the changes of variables $(x',v')=Z_2^{-1}(t,x,v)$ and $(x',v')=Z_1^{-1}(t,x,v),$ respectively
and using the properties of the flow (see Appendix \ref{s:AppendixA}), we obtain 
$$\displaylines{\dF=\int\!\!\int \bigl(V_2(t)-w_1(t,X_2(t))\bigr)\cdot\du(t,X_2(t))\,f_0\,dv'\,dx'\hfill\cr\hfill
-\int\!\!\int \bigl(V_1(t)-w_1(t,X_1(t))\bigr)\cdot\du(t,X_1(t))\,f_0\,dv'\,dx'.}$$
We further decompose $\dF$  into $A_1+A_2+A_3+A_4$ with 
$$\begin{aligned}
A_1(t)&:=\int\!\!\!\int f_0(x,v)\dV(t,x,v)\cdot\du(t,X_2(t,x,v))\,dv\,dx\\
A_2(t)&:=\int\!\!\!\int f_0(x,v)\bigl(w_1(t,X_1(t,x,v))-w_1(t,X_2(t,x,v))\bigr)\cdot\du(t,X_1(t,x,v))\,dv\,dx\\
A_3(t)&:=\int\!\!\!\int f_0(x,v)w_1(t,X_2(t,x,v))\bigl(\du(t,X_1(t,x,v))-\du(t,X_2(t,x,v))\bigr)\,dv\,dx\\
A_4(t)&:=\int\!\!\!\int f_0(x,v)V_1(t,x,v)\cdot\bigl(\du(t,X_2(t,x,v))- \du(t,X_1(t,x,v))\bigr) \,dv\,dx.
\end{aligned}$$
For bounding $A_1,$ we  use Young inequality:
$$A_1\leq\frac12\|\sqrt{f_0}\,\dV\|_{L^2}^2+\frac12\int\!\!\! f_0(x,v)|\du(t,X_2(t,x,v))|^2\,dx\,dv.$$
From \eqref{eq:formularho}, we  deduce that 
$$
\int\!\!\! f_0(x,v)|\du(t,X_2(t,x,v))|^2\,dx\,dv=\int\!\!\!\int f_2(t,x,v)|\du(t,x)|^2\,dv\,dx=\|\sqrt{\rho_2}\du(t)\|_{L^2}^2.
$$
Hence 
\begin{equation}\label{eq:A1}
A_1\leq \frac12\|\sqrt{f_0}\,\dV\|_{L^2}^2+\frac12\|\sqrt{\rho_2}\du(t)\|_{L^2}^2.\end{equation}
Next, using  Cauchy-Schwarz inequality and a change of variable gives
$$\begin{aligned}
A_2&\leq\|\nabla w_1\|_{L^\infty}\int\!\!\!\int f_0(x,v)|\dX(t,x,v)||\du(t,X_1(t,x,v))|\,dvdx\\
&\leq \|\nabla w_1\|_{L^\infty} \|\sqrt f_0 \dX\|_{L^2}\biggl(\int\!\!\!\int f_0(x,v) |\du(t,X_1(t,x,v))|^2\,dvdx\biggr)^{1/2}\\
&\leq \|\nabla w_1\|_{L^\infty} \|\sqrt f_0 \dX\|_{L^2}\biggl(\int\!\!\!\int f_1(t,x,v) |\du(t,x)|^2\,dvdx\biggr)^{1/2}.
\end{aligned}$$
Hence we have 
\begin{equation}\label{eq:A2}
A_2\leq\|\nabla w_1\|_{L^\infty}  \|\sqrt f_0 \dX\|_{L^2}\|\sqrt{\rho_1}\,\du\|_{L^2}.
\end{equation}
In order to bound $A_3$ and $A_4,$ we need to resort to the following inequality 
(see \cite{Stein}) that involves Hardy's maximal function $\cM:$
\begin{equation}\label{eq:Maximal}
|z(y)-z(x)|\leq C|x-y|\bigl(\cM(\nabla z)(y)+\cM(\nabla z)(x)\bigr)\cdotp\end{equation}
This enables us to write that\begin{multline}\label{eq:uniq2}
A_3\lesssim\int\!\!\! f_0(x,v) |w_1(t,X_2(t,x,v))| \bigl(\cM(\nabla \du)(t,X_1(t,x,v))\\+\cM(\nabla \du)(t,X_2(t,x,v))\bigr)
\dX(t,x,v)\,dxdv.\end{multline}
Now, from the usual change of variable, \eqref{eq:formularho} and the continuity of the maximal function on $L^2,$
 we gather that for $j=1,2,$ 
$$\int\!\!\!\int  (\cM(\nabla \du )(X_j))^2\,f_0\,dvdx=  \int (\cM(\nabla\du ))^2\biggl(\int f_j\,dv\biggr)dx\lesssim \|\rho_j\|_{L^\infty}\|\nabla\du\|_{L^2}^2.
$$
Hence, reverting to \eqref{eq:uniq2} gives
\begin{equation}\label{eq:A3}
A_3\leq \frac1{10}\|\nabla\du\|_{L^2}^2 + C\|w_1\|_{L^\infty}^2\|(\rho_1,\rho_2)\|_{L^\infty}  \|\sqrt f_0 \dX\|_{L^2}^2.
\end{equation}
Term $A_4$ is the most involved. To handle it, we observe that $A_4\leq A_{4,1}+A_{4,2}+A_{4,3}$ 
where 
$$
A_{4,3}= \int\!\!\!\int f_0(x,v)|\dV(t,x,v)|\,|\dX(t,x,v)| \cM(\nabla \du)(t,X_2(t,x,v))\,dvdx$$
and, for $j=1,2,$
$$
A_{4,j}=\int\!\!\!\int f_0(x,v)  |\dX(t,x,v)| |V_j(t,x,v)| \cM(\nabla\du)(t,X_j(t,x,v))\,dvdx.$$
By Cauchy-Schwarz inequality, change of variable, formula  \eqref{eq:formularho} and the continuity 
in $L^2$ of the maximal function, we have for $j=1,2,$ 
\begin{align}\label{eq:A4j}
A_{4,j}&\leq \|\sqrt{f_0}\,\dX\|_{L^2}\biggl(\int\!\!\!\int f_0|V_j|^2 (\cM(\nabla\du) (X_j))^2\,dvdx\biggr)^{1/2}\nonumber\\
&\leq \|\sqrt{f_0}\,\dX\|_{L^2}\|m_{2,j}\|_{L^\infty}^{1/2} \|\cM(\nabla\du)\|_{L^2}\nonumber\\
&\leq\frac1{10}\|\nabla\du\|_{L^2}^2+C\|m_{2,j}\|_{L^\infty}\|\sqrt{f_0}\, \dX\|_{L^2}^2.\end{align}
Similarly, 
\begin{equation}\label{eq:A43}
A_{4,3}\leq \frac1{10}\|\nabla\du\|_{L^2}^2+C\|\rho_2\|_{L^\infty}\|\dV\|_{L^\infty}^2\|\sqrt{f_0}\, \dX\|_{L^2}^2.
\end{equation}
Putting  together Inequalities \eqref{eq:uniq1}, \eqref{eq:A1}, \eqref{eq:A2}, 
  \eqref{eq:A3}, \eqref{eq:A4j} and \eqref{eq:A43}  yields  
\begin{multline}\label{eq:uniq5}
\frac d{dt}\|\du\|_{L^2}^2+\|\nabla\du\|_{L^2}^2 +\|\sqrt{\rho_2}\,\du\|_{L^2}^2 \leq 
2\int \dS\cdot \du\,dx
+\|\sqrt{f_0}\,\dV\|_{L^2}^2\\
+C\Bigl(\|\nabla u_2\|_{L^3}^2\|\dw\|_{L^2}^2+ \|\rho_1\|_{L^\infty}^{1/2}  \|\nabla w_1\|_{L^\infty}
\|\sqrt{f_0}\,\dX\|_{L^2}\|\du\|_{L^2}\\+
\bigl(\|(\rho_1,\rho_2)\|_{L^\infty}\|w_1\|_{L^\infty}^2 +\|(m_{2,1},m_{2,2})\|_{L^\infty}
+\|\rho_2\|_{L^\infty}\|\dV\|_{L^\infty}^2\bigr) \|\sqrt{f_0}\,\dX\|_{L^2}^2\Bigr)\cdotp
\end{multline}
We have to keep in mind that \eqref{eq:V}  implies that 
\begin{equation}\label{eq:uniq3} \dV(t,x,v)=\int_0^t e^{s-t}\bigl(w_2(s, X_2(s,x,v))-w_1(s,X_1(s,x,v))\bigr)\,ds,
\end{equation}
and thus 
\begin{equation}\label{eq:uniq4}
\|\dV\|_{L^2(0,t;L^\infty)} \leq \|w_1\|_{L^2(0,t;L^\infty)} +\|w_2\|_{L^2(0,t;L^\infty)} .\end{equation}
To complete the proof, we have to look at the time evolution of $\|\sqrt{f_0}\,\dZ(t)\|_{L^2(\R^3\times\R^3)}.$
Using \eqref{eq:car}, we see that $$\dX_t=\dV \andf\dV_t=w_2(X_2)-w_1(X_1)-\dV.$$ Hence
$$\begin{aligned}\frac12\frac d{dt}\|\sqrt{f_0}\,\dZ\|_{L^2}^2\!+\!\|\sqrt{f_0}\,\dV\|_{L^2}^2&=\!\int\!\!\!\int\! f_0\dX\!\cdot\!\dV\,dvdx
+\!\int\!\!\!\int\! f_0 \bigl(w_2(X_2)-w_1(X_1)\bigr)\!\cdot\!\dV dvdx\\
&\leq \frac12\|\sqrt{f_0}\,\dX\|_{L^2}^2+\frac12\|\sqrt{f_0}\,\dV\|_{L^2}^2+B_1+B_2\end{aligned}$$
with 
$$B_1:=\int\!\!\!\int\! f_0 \bigl(w_1(X_2)-w_1(X_1)\bigr)\!\cdot\!\dV dvdx
\andf B_2:=\int\!\!\!\int\! f_0 \bigl(w_2(X_2)-w_1(X_2)\bigr)\!\cdot\!\dV dvdx.$$
It is obvious that 
\begin{align}\label{eq:f01}
B_1&\leq \|\nabla w_1\|_{L^\infty} \int\!\!\!\int\! f_0|\dX||\dV|\,dv\,dx,\\
B_2&\leq \frac12\int\!\!\!\int f_0|\dV|^2
+\frac12\int\!\!\!\int f_0(x,v) |\dw(t,X_2(t))|^2\,dvdx\nonumber\\ \label{eq:f02}
&\leq \frac12\int\!\!\!\int f_0|\dV|^2
+\frac12\int\!\!\!\int f_2(t,x,v) |\dw(t)|^2\,dvdx.\end{align}
Hence, we end up with 
\begin{equation}\label{eq:uniq13}
\frac d{dt}\|\sqrt{f_0}\,\dZ\|_{L^2}^2
\leq(1+\|\nabla w_1\|_{L^\infty})\|\sqrt{f_0}\,\dZ\|_{L^2}^2+\|\sqrt{\rho_2}\,\dw\|_{L^2}^2.
\end{equation}
Putting this inequality together with \eqref{eq:uniq4} and \eqref{eq:uniq5} yields
\begin{multline}\label{eq:stab} \frac d{dt}\bigl(\|\du\|_{L^2}^2+\|\sqrt{f_0}\,\dZ\|_{L^2}^2\bigr)+
\|\nabla\du\|_{L^2}^2+\|\sqrt{\rho_2}\,\du\|_{L^2}^2
\leq 2\int \dS\cdot \du\,dx+\|\sqrt{\rho_2}\,\dw\|_{L^2}^2\\+
C\biggl(\|\nabla u_2\|_{L^3}^2\|\du\|_{L^2}^2+ \bigl((1+\|\rho_1\|_{L^\infty}^{1/2})\|\nabla w_1\|_{L^\infty}+1\bigr) \bigl(\|\du\|_{L^2}^2+\|\sqrt{f_0}\,\dZ\|_{L^2}^2\bigr)\\+
\bigl(\|(\rho_1,\rho_2)\|_{L^\infty}\|w_1\|_{L^\infty}^2 +\|(m_{2,1},m_{2,2})\|_{L^\infty}
+\|\rho_2\|_{L^\infty}\|(w_1,w_2)\|_{L^2(0,t;L^\infty)}\bigr) \|\sqrt{f_0}\,\dX\|_{L^2}^2\biggr)\cdotp
\end{multline}
By Gronwall lemma the above inequality readily implies the uniqueness part of Theorem \ref{thm:main1} 
(take $S_1=S_2=0,$ $w_1=u_1$ and $w_2=u_2$). Indeed, we then have
$\nabla u_1$  in $L^1_{loc}(\R_+;L^\infty),$ 
$\nabla u_2$  in $L^2(\R_+;L^3),$ $u_1,u_2$ belong to  $L^2_{loc}(\R_+;L^\infty)$ and 
$\rho_1,\rho_2,m_{2,1},m_{2,2}$  are in $L^\infty_{loc}(\R_+;L^\infty).$ 


   \subsection{The proof of existence}\label{s:construction}

Fix some  initial data $(f_0,u_0)$ satisfying  the smallness hypothesis \eqref{eq:smalldata} and
 consider the following Friedrichs approximation of the Vlasov-Navier-Stokes system:   
$$\left\{\begin{array}{l}
\d_tf+v\cdot\nabla_xf+\divv\bigl({(J_nu-v)f}\bigr)=0,\\[1ex]
\d_t u+J_n(u\cdot\nabla_x u)-\Delta_x u=J_n\biggl(\Int {(v-u)f}\,dv\biggr),\end{array}\right.\leqno(VNS_n)$$
where   the spectral orthogonal projector $J_n$ of $L^2(\R^3;\R^3)$ is defined   for all $n\in\N$ by  $J_n=\cP 1_{B(0,n)}(D).$  
Note that the range of $J_n$ is the space 
$$L^2_n:=\bigl\{z\in L^2(\R^3;\R^3),\;  \div z=0\andf {\rm Supp}\, \wh z\subset \bar B(0,n)\bigr\}\cdotp$$
Let us admit temporarily that System $(VNS_n)$  supplemented with initial data $(f_0,J_nu_0)$
has a unique solution $(f^n,u^n)$ on some time interval $[0,T]$ such that $f^n$ satisfies the properties stated in 
Theorem \ref{thm:main1} and $u^n$ belongs to   the space 
$$E_T^n:=\cC([0,T]; L^2_n).$$ 
Owing to the spectral localization, the vector-field $u^n$ belongs to all Sobolev spaces and 
since  $J_nu^n=u^n$ and   $J_n$ is a $L^2$ orthogonal projector, all 
 the energy estimates of  Sections \ref{s:energy}  are valid for $(f^n,u^n).$ 
{In particular, there holds that
 \begin{multline}\label{eq:energyn}
 \frac12\Bigl(\|u^n(t)\|_{L^2(\R^3_x)}^2+\||v|^2f^n\|_{L^1(\R^3_x\times\R^3_v)}\Bigr)\\
 +\int_0^t\Bigl(\|u^n(t)\|_{L^2(\R^3_x)}^2+\||v-u^n|^2f^n\|_{L^1(\R^3_x\times\R^3_v)}\Bigr)\leq E_{0,0}\end{multline}}
 and,    from Corollary \ref{c:final}, we infer  that  if \eqref{eq:Lip} holds on $[0,T],$ then
we have
\begin{equation}\label{eq:cEn}
\cE^n(t)+\int_0^t\cD^n\,d\tau\leq C\biggl(\bigl(1+\|\langle v\rangle^2 f_0\|_{L^1(\R^3_v;L^\infty(\R^3_x))}\bigr)\biggl(\|u_0\|_{H^1}^2 + \int\!\!\!\int |v|^2f_0\,dv\,dx\biggr)\biggr)
\end{equation} 
for all $t\in[0,T],$ where the functionals $\cD^n$ and $\cE^n$ corresponding to $(f^n,u^n)$ are defined as in \eqref{eq:DE}.  
\smallbreak
{From the definition of Besov norms in \eqref{def:besov} and Fourier-Plancherel theorem, 
it is clear that  $J_n$ maps all  Besov spaces 
$\dot B^{-\sigma}_{2,\infty}$ to themselves, with norm $1.$  One can thus repeat the computations of  Section \ref{s:decay} and 
eventually get the key Lipschitz control \eqref{eq:Lip}.}
  This allows us to continue $(f^n,u^n)$  for all positive times {(see \eqref{eq:continuation} below),} and \eqref{eq:cEn} is satisfied on $\R_+.$
\smallbreak
In short, we have constructed a sequence $(f^n,u^n)_{n\in\N}$ of global approximate solutions, that 
satisfy \eqref{eq:cEn} and all the estimates of Sections \ref{s:energy} and \ref{s:decay}, uniformly 
with respect to $n$ (since the right-hand side may be bounded independently of $n$).  
 Furthermore, the distribution $f^n$ satisfies the estimates \eqref{eq:formularho} to  \eqref{eq:boundrho}, in particular
 \begin{equation}\label{eq:boundfn} 
 \|f^n(t)\|_{L^\infty(\R^3\times\R^3)} \leq e^{3t} \|f_0\|_{L^\infty(\R^3\times\R^3)}\quad\hbox{for all }\ t\geq0.\end{equation}


From Inequalities \eqref{eq:cEn} and \eqref{eq:boundfn}, we deduce that, up to subsequence, 
$$
f^n\rightharpoonup f\quad\hbox{in }\ L^\infty \andf
u^n\rightharpoonup u \quad\hbox{in }\ L^\infty(\R_+;H^1)\cap L^2_{loc}(\R_+;H^2) \qquad  \hbox{weak } *.$$
We claim  that $(u^n)_{n\in\N}$ converges \emph{strongly} to $u$ in the energy space. 
In fact, we are going to show that it is a Cauchy sequence in $\cC([0,T];L^2) \cap L^2(0,T; H^1)$ for all 
$T>0.$ To prove it, we observe that for all $n\in\N,$ the velocity field $u^n$ satisfies 
the second equation of $(VNS)$ with source term 
$$S^n:=(\cP-J_n)(u^n\cdot\nabla u^n)+(\cP-J_n)\int {(u^n-v)f^n}\,dv.$$ 
Now, owing to the spectral cut-off, Inequality \eqref{eq:L2}, H\"older  and Gagliardo-Nirenberg inequality, we have
$$\begin{aligned}
\|S^n\|_{\dot H^{-1}} &\leq n^{-1}\biggl(\|u^n\cdot\nabla u^n\|_{L^2}+\biggl\|\int  {(u^n-v)f^n}\,dv\biggr\|_{L^2}\biggr)\\
&\leq Cn^{-1}\biggl(\|\nabla u^n\|_{L^{2}}^{3/2}\|\nabla^2u^n\|_{L^2}^{1/2} +\|\rho^n\|_{L^\infty}^{1/2}
\biggl(\int\!\!\!\int |u^n-v|^2f^n\,dv\,dx\biggr)^{1/2}\biggr)\cdotp
\end{aligned}$$
Hence, introducing the functionals $E_0^n,$ $D_0^n,$ $E_1^n$ and $D_1^n$ like in  \eqref{eq:E} and \eqref{eq:D},
 we discover that
for all $t\geq0,$
$$\int_0^t\|S^n\|_{\dot H^{-1}}^2\,d\tau\leq Cn^{-2}\biggl(\int_0^t \bigl(D_0^n D_1^n)^{1/2} E_1^n + \|f_0\|_{L^1(\R^3_v;L^\infty(\R^3_x))}
D_0^n\bigr)d\tau\biggr)\cdotp$$
Note also that 
$$\|J_nu_0-J_mu_0\|_{L^2} \leq n^{-1}\|u_0\|_{\dot H^1}  \quad\hbox{for } m>n>0.$$
Therefore, taking advantage of Inequalities \eqref{eq:stab} and \eqref{eq:cEn} and of Gronwall lemma, 
one can  conclude that 
for all $n\in\N$ and $m>n,$ we have for some increasing function $C_{0}$ depending on the data, but 
\emph{independent of $m$ and $n$}, 
$$\|(u^m-u^n)(t)\|_{L^2}^2+\int_0^t\|\nabla(u^m-u^n)\|_{L^2}^2\,d\tau \leq C_{0}(t)\: n^{-2}.$$
This completes the proof of our claim.
\medbreak 
Since the convergence of $(u^n)_{n\in\N}$ is strong in the energy space, it is now easy to conclude that $(f,u)$ satisfies
(VNS) in the sense of distributions.  
Furthermore,  since for all $n\in\N$, the approximate solution $(f^n,u^n)$ satisfies  the energy balance
$$\displaylines{\frac12\biggl(\|u^n(t)\|_{L^2}^2+\int\!\!\! \int  |v|^2f^n(t)\,dv\,dx\biggr) +\int_0^t\biggl(\|\nabla u^n\|_{L^2}^2+\int\!\!\!\int f|v-u^n|^2\,dv\,dx\biggr)d\tau\hfill\cr\hfill 
= \frac12\biggl(\|u^n_0\|_{L^2}^2+\int\!\!\! f_0 |v|^2\,dv\,dx\biggr),}$$ 
the strong convergence guarantees that $(f,u)$ satisfies  \eqref{eq:energy0}. 
\smallbreak
Finally,  that $u\in \cC(\R_+;H^1)$ stems from the fact that 
$u$ satisfies a heat equation with initial data in $H^1$ and right-hand side in $L^2_{loc}(\R_+;L^2).$  
\medbreak 
For completeness, let us explain how to solve $(VNS_n)$ locally in time for fixed $n\in\N.$  
Toward this, we consider the map $\Phi$ defined on $E_T^n$ by $\Phi(w)=u,$ where $u$ stands  for the solution of the \emph{linear}
parabolic equation
\begin{equation}\label{eq:Sn}
\d_tu-\Delta u=J_n\biggl(\int {(v-w)f}\,dv -w\cdot\nabla u\biggr),\qquad u|_{t=0}=J_nu_0,
\end{equation}
and  $f$ stands for the unique bounded solution of the (linear) transport equation~:
\begin{equation}\label{eq:f}
\d_tf+v\cdot\nabla_xf+\divv\bigl({(w-v)f}\bigr)=0,\qquad f|_{t=0}=f_0.
\end{equation}
To implement the fixed point theorem, let us endow $E^n_T$ with the following norm~:
$$\|z\|_{E^n_T}:=\biggl(\underset{t\in[0,T]}\sup \|z(t)\|_{L^2}^2 +\int_0^T\|\nabla z\|_{L^2}^2\,dt\biggr)^{1/2}.$$
Since $w\in E^n_T,$ we actually have $w\in\cC([0,T];C^{1,0})$ owing to the spectral localization. This
guarantees the existence and uniqueness of a solution to \eqref{eq:f}: it is given by \eqref{eq:formularho}
after computing the  characteristics according to \eqref{eq:car} (with $w$ instead of $u,$ of course). 
\smallbreak
We claim that if $T$ is small enough, then $\Phi$ has a fixed point in $E_T^n.$ 
Indeed, by an energy method, we get
$$\frac12\frac d{dt}\biggl(\|u\|_{L^2}^2+\int\!\!\!\int |v|^2f\,dv\,dx\biggr)+\|\nabla u\|_{L^2}^2+\int\!\!\!\int |v-u|^2f\,dv\,dx
=\int\!\!\!\int  {((w-u)f)}\cdot (v- u)\,dv\,dx.$$
Combining H\"older, Sobolev and Young inequalities gives
$$\int\!\!\!\int {((w-u)f)}\cdot (v-u)\,dv\,dx\leq \int\!\!\!\int |v-u|^2f\,dv\,dx+\frac14\|\rho\|_{L^\infty}\|w-u\|_{L^2}^2
$$
where $\rho$ denotes the density associated to $f$ through \eqref{eq:not}. 
\smallbreak
To continue the proof, we assume that $w$ satisfies \eqref{eq:Lip}, so that 
we have  $$\|\rho\|_{L^\infty(0,T;L^{\infty})}\leq 2\|f_0\|_{L^1(\R^3_v;L^{\infty}(\R^3_x))}.$$
Then, the above inequality combined with Gronwall's lemma gives for all $t\in[0,T],$
\begin{multline}\label{eq:pf1}
\|u(t)\|_{L^2}^2+2\int_0^t\|\nabla u\|_{L^2}^2\,d\tau\\\leq \biggl(\|u_0\|_{L^2}^2+\int\!\!\!\int |v|^2f_0\,dv\,dx
+t\|f_0\|_{L^1(\R^3_v;L^{\infty}(\R^3_x))}\|w\|_{L^\infty(0,t:L^2)}^2\biggr)
e^{t\|f_0\|_{L^1(\R^3_v;L^{\infty}(\R^3_x))}}\cdotp\end{multline}
Let us assume that $w$ belongs to the closed ball $\bar B_{E_T^n}(0,M)$ of $E_T^n$ with $M$ defined by
\begin{equation}\label{def:M} M^2=2\biggl(1+\|u_0\|_{L^2}^2+\int\!\!\!\int |v|^2f_0\,dv\,dx\biggr)\cdotp\end{equation}
The spectral localization of $w$  and a suitable Gagliardo-Nirenberg inequality guarantee that
$$\|\nabla w\|_{L^\infty}\leq C\|\nabla w\|_{\dot H^1}^{1/2} \|\nabla w\|_{\dot H^2}^{1/2}\leq Cn^{3/2}\|\nabla w\|_{L^2},
$$
whence
$$\int_0^T \|\nabla w\|_{L^\infty} \leq Cn^{3/2} T^{1/2}\|w\|_{E_T^n}.$$
Since we need  \eqref{eq:Lip} to be satisfied by $w,$ we conclude that if $T$ is such that
\begin{equation}\label{eq:smallT}Cn^{3/2} T^{1/2} M\leq \delta,\quad T \|f_0\|_{L^1(\R^3_v;L^{\infty}(\R^3_x))}\leq\log2\andf \|f_0\|_{L^1(\R^3_v;L^{\infty}(\R^3_x))}TM^2\leq 1,\end{equation}
then $u$ is in $\bar B_{E_T^n}(0,M),$ too, owing to \eqref{eq:pf1}. 
\smallbreak
In order to complete the proof of the existence of a fixed point in $\bar B_{E_T^n}(0,M),$  it suffices
to show that $\Phi$ is a contracting mapping on this ball. 
To do so, we consider two elements $w_1$ and $w_2$ of $\bar B_{E_T^n}(0,M)$ with $T$ satisfying \eqref{eq:smallT}. 
Then, both  $u_1:=\Phi(w_1)$ and $u_2:=\Phi(w_2)$  are in $\bar B_{E_T^n}(0,M).$  Let $f_i$ be the distribution associated to $w_i$ through the transport
equation \eqref{eq:f}. 
In order to get the desired property of contraction, one can  adapt the method leading to the stability 
estimate \eqref{eq:stab}, to the following system verified by $(f_i,u_i)$ with $i=1,2$:
$$\left\{\begin{aligned}
&\d_tu_i-\Delta u_i+ J_n\biggl(w_i\cdot\nabla u_i+\int {(w_i-v)f_i}\,dv\biggr)= 0,\\
&\d_tf_i+v\cdot\nabla_xf_i+\divv\bigl({(w_i-v)f_i}\bigr)=0.\end{aligned}\right.$$
The only difference compared to System \eqref{eq:VNS} is   the orthogonal projector $J_n$ in the first equation. 
However, $J_n$ restricted to $L^2_n$ is just the identity so that it has no effect in the energy estimates, and \eqref{eq:stab} still holds. 
Now, remembering that \eqref{eq:Lip} and \eqref{eq:smallT} are satisfied, and taking advantage of the control on $\rho_i$ and
$m_{2,i}$ provided by \eqref{eq:boundrho} and \eqref{eq:bounde}, we get
$$\frac d{dt}\dU+\dH \leq C\bigl(\|\nabla u_2\|_{L^3}^2+\|\rho_2\|_{L^\infty}\bigr)\dW +
C_{f_0}\bigl(1+\|w_1\|_{L^\infty}^2 +\|\nabla w_1\|_{L^\infty}\bigr)\dU,
$$
with $C_{f_0}$ depending only on $\|f_0\|_{L^1(\R^3_v;L^{\infty}(\R^3_x))},$
$$\dU(t):=\|\du(t)\|_{L^2}^2+\|\sqrt{f_0} \dZ(t)\|_{L^2},\quad
\dW(t):=\|\dw(t)\|_{L^2}^2\andf \dH(t):=\|\nabla\du(t)\|_{L^2}^2.$$
Hence, using Gronwall lemma gives
$$
\dU(t)+\int_0^t\dH\,d\tau\leq C\int_0^t e^{C_{f_0}\int_\tau^t(1+\|w_1\|_{L^\infty}^2 +\|\nabla w_1\|_{L^\infty})\,d\tau'}
\bigl(C_{f_0}+\|\nabla u_2\|_{L^3}^2\bigr)\dW\,d\tau.$$
Remember that $w_1$ satisfies \eqref{eq:Lip}. Furthermore, the spectral localization of $w_1$ and $u_2$  ensures that
$$\|w_1\|_{L^\infty}\leq Cn^{1/2}\|w_1\|_{L^2}\andf \|\nabla u_2\|_{L^3}^2\leq Cn^3\|u_2\|_{L^2}^2.$$
Hence, for small enough $t,$ we have
$$\|\du\|_{L^\infty(0,t;L^2)}^2 + \int_0^t \|\nabla\du\|_{L^2}^2\,d\tau\leq 2Ct(C_{f_0}+n^3M)\|\dw\|_{L^\infty(0,t;L^2)}^2,$$
which implies that the map $\Phi$ is contractive in $\bar B_{E_T^n}(0,M),$ if $T$ satisfies \eqref{eq:smallT} and
\begin{equation}\label{eq:smallTbis}
 2CT(C_{f_0}+n^3M)<1.\end{equation}
The Banach fixed point theorem allows to conclude that
$\Phi$ admits a (unique) fixed point in $\bar B_{E_T^n}(0,M).$ 
{Furthermore, owing to \eqref{eq:smallT} and \eqref{eq:smallTbis}, $T$ may be bounded from below
in terms of $E_{0,0},$  $\|f_0\|_{L^1(\R^3_v;L^{\infty}(\R^3_x))}$ and $n.$ Consequently, whenever 
\begin{equation}\label{eq:continuation}\underset{t\in[0,T]}\sup E_0(t)+ \underset{t\in[0,T]}\sup \|f(t)\|_{L^1(\R^3_v;L^{\infty}(\R^3_x))}<\infty,\end{equation}
the solution of $(VNS_n)$ may be continued beyond $T.$}\qed

   
   \subsection{Convergence of the distribution of particles} 
  
  Here we prove Corollary \ref{c:was}. Having  the time decay of $E_1$ at hand, establishing the convergence of the distribution function $f$ 
 may be done as in \cite{HMM}.  As a first, we write that  by definition of $\rho$ and of the Wasserstein distance $W_1,$ we have
  $$W_1(f(t),\rho(t)\otimes\delta_{u(t)})=\underset{\|\nabla_{x,v}\phi\|_{L^\infty}=1}\sup
  \int\!\!\!\int f(t,x,v)\bigl(\phi(x,v)-\phi(x,u(t,x))\bigr)dv\,dx.  $$
  Using the definition of the Lipschitz norm, then Cauchy-Schwarz inequality and \eqref{eq:M0}, one can write
  for all function $\phi$ with Lipschitz norm equal to $1,$ 
  $$\begin{aligned}
   \int\!\!\!\int f(t,x,v)\bigl(\phi(x,v)-\phi(x,u(t,x))\bigr)dx\,dv&\leq \int\!\!\!\int f(t,x,v)|v-u(t,x)|\,dv\,dx\\
   &\leq \biggl(\int\rho(t)\,dx\: \int\!\!\!\int f(t)|v-u(t)|^2\,dv\,dx\biggr)^{1/2}\\
   &\leq \sqrt{ M_0\: E_1(t)}.
  \end{aligned}  $$
  Keeping \eqref{eq:E1t} in mind completes the proof of the decay for the Wasserstein distance. 
  \smallbreak
Similarly, by definition of $\rho$ and of $j,$ 
  $$\begin{aligned}
  \|j(t)-(\rho u)(t)\|_{L^1}&\leq \int\!\!\!\int f(t)|v-u(t)|\,dv\,dx\\
  &\leq \sqrt{M_0\: E_1(t)},\end{aligned}$$
which completes the proof of \eqref{eq:conv1}. 
\smallbreak
 Finally,  integrating with respect to the kinetic and time variables gives for all $t\geq0,$
    \begin{equation}\label{eq:eq}
  \rho(t)=\rho_0-\div\biggl(\int_0^t j\,d\tau\biggr)\cdotp\end{equation}
  Now, we have $j=(j-\rho u) +\rho u.$ On the one hand, since $j-\rho u$ satisfies \eqref{eq:conv1}, it is in $L^1(\R_+;L^1).$
  On the other hand, from H\"older and Sobolev inequality,  and \eqref{eq:M0}, we have
  $$  \|(\rho u)(t)\|_{L^1}\leq \|\rho(t)\|_{L^{6/5}}\|u(t)\|_{L^6}\leq C M_0^{5/6} \|\rho(t)\|_{L^\infty}^{1/6}\|\nabla u(t)\|_{L^2}.$$
From Theorem \ref{thm:main1}, the density $\rho$ is uniformly bounded on $\R_+,$
  and $\|\nabla u(t)\|_{L^2}$ decays according to \eqref{eq:E1t}.  Hence $\rho u$ is also  in $L^1(\R_+;L^1).$
  One can thus set  
  $$j_\infty:= \int_0^\infty j\,d\tau\andf \rho_\infty:=\rho_0-\div j_\infty$$
 so that \eqref{eq:eq} implies that
  $$\rho_\infty-\rho(t)=-\div\biggl(\int_t^\infty(j-\rho u)\,d\tau +\int_t^\infty \rho u\,d\tau\biggr)\cdotp$$
  Taking advantage of \eqref{eq:E1t} and of the above bounds for $j-\rho u$ and $\rho u$ yields    \eqref{eq:conv2}.
\medbreak
{Observe moreover that $\rho_\infty$ is in $L^\infty,$ due to the fact that $\|\rho(t)\|_{L^\infty}$ is bounded on $\R_+.$}


\section{The Fujita-Kato theorem for the Vlasov-Navier-Stokes system}\label{s:FK}

This section is devoted to the case where the initial velocity  $u_0$ has only  critical Sobolev regularity $H^{1/2}.$ 
The key step is to establish  that if $\|u_0\|_{\dot H^{1/2}}$ and $E_0$ are small enough, then (VNS) admits a weak solution such that
$u(t)$ remains in $\dot H^{1/2}$ (and controlled by the data)  until time $t=1,$ and  possesses the   log Lipschitz regularity
that is needed for getting  uniqueness  \emph{within the class of Fujita-Kato solutions.}  
All these properties  will be achieved by a bootstrap argument, assuming beforehand that $\|u\|_{L^1(0,1;L^\infty)}$ is small enough. 
As a result, we shall obtain some $t_0\in(0,1)$ for which the smallness condition \eqref{eq:smalldata}
is satisfied by $(f(t_0),u(t_0)).$ Since it is also possible to propagate the regularity $\dot B^{-3/2}_{2,\infty}$ in this setting, 
all the hypotheses of Theorem \ref{thm:main1} are satisfied at time $t_0,$ which allows us to construct
a solution on the interval $[t_0,\infty)$ with the properties stated therein (in particular the optimal decay rates for $E_0$ and~$E_1$). The uniqueness result that will be proved below ensures that
the two solutions coincide on $[t_0,1],$ which will yield the full statement of Theorem \ref{thm:main2}.  


\subsection{The incompressible Navier-Stokes equations with source term}

The key to obtaining \eqref{eq:loglip} is to consider the following Navier-Stokes equations with source term: 
\begin{equation}\label{eq:NS}
\left\{\begin{array}{ll} &\d_t u+u\cdot\nabla u-\Delta u+\nabla P= S\in L^{4/3}(0,T;L^2(\R^3)),\\[1ex]
&\div u=0,\\[1ex]
&u|_{t=0}=u_0\in \dot H^{1/2}.\end{array}\right.\end{equation}
We denote by $u_L$ the solution to the linearized equation, namely $u_L:=u_L^1+u_L^2$ with
\begin{equation}\label{def:uL} u_L^1(t):=e^{t\Delta}u_0\andf u_L^2(t):=\int_0^te^{(t-\tau)\Delta} \cP S(\tau)\,d\tau\end{equation}
where $(e^{t\Delta})_{t>0}$ stands for the heat semi-group and $\cP,$ for the  orthogonal projector on divergence free vector-fields. 
\begin{proposition}\label{p:NS}  
The solution $u_L^1$  to the free heat equation with initial data $u_0$ 
belongs to\footnote{{Here $\wt L^1(\R_+;\dot H^{5/2}):=\wt L^1(\R_+;\dot B^{5/2}_{2,2}) $ is the superspace of $L^1(\R_+;\dot H^{5/2})$ that is defined in Proposition \ref{p:maxreg}.}}
\begin{equation}\label{eq:spaceuL1}\cC(\R_+;\dot H^{1/2})\cap L^2(0,T;\dot B^{3/2}_{2,1})\cap \wt L^1(\R_+;\dot H^{5/2})\end{equation}
 and we have
\begin{equation}\label{eq:uuL1}\|u_L^1\|_{L^\infty(\R_+;\dot H^{1/2})}+\|u_L^1\|_{L^2(0,T;\dot B^{3/2}_{2,1})}
+\|u_L^1\|_{\wt L^1(\R_+;\dot H^{5/2})} \leq C\|u_0\|_{\dot H^{1/2}}.\end{equation}
The solution $u_L^2$ to the heat equation with source term  $S$ belongs to 
\begin{equation}\label{eq:spaceuL2}\cC([0,T];\dot H^{1/2})\cap L^2(0,T;\dot B^{3/2}_{2,1})\cap  L^{4/3}(0,T;\dot H^{2}),\end{equation}
 and satisfies
\begin{equation}\label{eq:uuL2}\|u_L^2\|_{L^\infty(0,T;\dot H^{1/2})}+\|u_L^2\|_{L^2(0,T;\dot B^{3/2}_{2,1})}
+\|u_L^2\|_{L^{4/3}(\R_+;\dot H^{2})} \leq C\|S\|_{L^{4/3}(0,T;L^2)}.\end{equation}
Finally, there exists a universal constant $c$ such that if
\begin{equation}\label{eq:smalluL}\|u_0\|_{\dot H^{1/2}}+ \|S\|_{L^{4/3}(0,T;L^2)} \leq c,\end{equation}
then \eqref{eq:NS} 
admits a unique solution $u$ on $[0,T]$ with 
$$\wt u:=u-u_L^1-u_L^2  \ \hbox{ in }\  \cC([0,T];\dot B^{1/2}_{2,1})\cap L^2(0,T;\dot B^{3/2}_{2,1})\cap L^1(0,T;\dot B^{5/2}_{2,1}),$$
and we have
\begin{equation}\label{eq:wuuL1}\|\wt u\|_{L^\infty(0,T;\dot B^{1/2}_{2,1})}\!+\!\|\wt u\|_{L^2(0,T;\dot B^{3/2}_{2,1})}
\!+\!\|\wt u\|_{L^{1}(0,T;\dot B^{5/2}_{2,1})}\leq C\bigl(\|u_0\|_{\dot H^{1/2}}^2\!+\! \|S\|_{L^{4/3}(0,T;L^2)}^2\bigr)\cdotp\end{equation}
\end{proposition}
\begin{proof}   {The fact that $u_L^1$ and $u_L^2$ belong to the two extremal spaces in \eqref{eq:spaceuL1} and \eqref{eq:spaceuL2}
stems from the maximal regularity properties that are called in Proposition \ref{p:maxreg}. 
As for the improved regularity  $L^2(0,T;\dot B^{3/2}_{2,1})$ (instead of just $L^2(0,T;H^{3/2})$), 
it has been pointed out recently by    Ars\'enio in \cite{Arsenio}, and Arsenio and Houamed in \cite{AH}.}

To prove the last  part of the statement, we observe that $\wt u$ satisfies
$$\d_t\wt u-\Delta\wt u+\nabla P=-u\cdot\nabla u, \quad \div\wt u=0\andf \wt u|_{t=0}=0.$$
Hence, {using  again   Proposition \ref{p:maxreg}}   yields  for all $t\in[0,T],$
$$\|\wt u\|_{L^\infty(0,t;\dot B^{1/2}_{2,1})}+\|\wt u\|_{L^{2}(0,t;\dot B^{3/2}_{2,1})}
+\|\wt u\|_{L^{1}(0,t;\dot B^{5/2}_{2,1})}  \leq C\|u\cdot\nabla u\|_{L^{1}(0,t;\dot B^{1/2}_{2,1})}.$$
Now, taking advantage of {the product law \eqref{eq:productlaw}, of \eqref{eq:equivk}}
 and of Cauchy-Schwarz inequality with respect to the time
variable,  
we obtain 
\begin{multline*}\|\wt u\|_{L^\infty(0,t;\dot B^{1/2}_{2,1})}+\|\wt u\|_{L^{2}(0,t;\dot B^{3/2}_{2,1})}
+\|\wt u\|_{L^{1}(0,t;\dot B^{5/2}_{2,1})}\\  \leq C
\bigl( \|u_L^1\|_{L^{2}(0,t;\dot B^{3/2}_{2,1})}^2+ \|u_L^2\|_{L^{2}(0,t;\dot B^{3/2}_{2,1})}^2+\|\wt u\|_{L^{2} (0,t;\dot B^{3/2}_{2,1})}^2\bigr)\cdotp\end{multline*}
 Provided the smallness condition \eqref{eq:smalluL} is satisfied, 
 it is easy to work out  from this inequality a fixed point scheme so 
as to construct a solution $u$ of \eqref{eq:NS} with the desired properties.
 \end{proof}


\subsection{Construction of a solution on an interval of size $1$} \label{s:size1}

{Consider initial data satisfying the assumptions of Theorem \ref{thm:main2} and  the sequence $(f^n,u^n)_{n\in\N}$ of
smooth approximate solutions of System $(VNS_n)$ corresponding to initial data $(f_0,J_nu_0)$
constructed in Subsection \ref{s:construction}.  Let us denote by $T^n$ their maximal time of existence.
Our aim here is to prove that  if the constant $c_0$ in \eqref{eq:smallness2} is small enough, then $T^n>1$ and 
$(f^n,u^n)$ satisfies the properties of regularity of Theorem \ref{thm:main2} on the time interval $[0,1],$ uniformly with respect to $n.$ }

{Toward this, the main idea is to  use Proposition \ref{p:NS} with $S=j^n-\rho^n u^n.$
In fact, since  $\dot B^{3/2}_{2,1}\hookrightarrow L^\infty$ (see the Appendix) and, obviously $\|J_nu_0\|_{\dot H^{1/2}}\leq \|u_0\|_{\dot H^{1/2}},$ it implies that if, for some $T>0$ we have
\begin{equation}\label{eq:smallness3}\|u_0\|_{\dot H^{1/2}} + \|j^n-\rho^n u^n\|_{L^{4/3}(0,T;L^2)} \leq c,\end{equation}
then
\begin{equation}\label{eq:smallness4} \|u^n\|_{L^2(0,T;L^\infty)}\leq Cc.\end{equation}
Now,  observe that owing to \eqref{eq:L2},  we have for all $T\in(0,T^n),$
\begin{equation}\label{eq:frhou}\|j^n-\rho^n u^n\|_{L^2(0,T\times\R^3)}^2\leq \|\rho^n\|_{L^\infty(0,T\times\R^3)} \int_0^T\!\!\int\!\!\int \rho^n|v-u^n|^2\,dv\,dx\,dt.\end{equation}
Hence, taking advantage of Identity \eqref{eq:energy0} and of Inequality \eqref{eq:rhoNq}, we discover that we have for some constant $C_q$ depending only on $q>3,$ 
\begin{align}\label{eq:jrhou}\nonumber
\|j^n-\rho^n u^n\|_{L^{4/3}(0,T;L^2)}&\leq T^{1/4} \|j^n-\rho^n u^n\|_{L^2(0,T\times\R^3)}\\&\leq  C_q T^{1/4}e^{3T/2}\bigl(1+\|u^n\|_{L^1(0,T;L^\infty)}^{q/2}\bigr) N_q^{1/2}E_0^{1/2}.\end{align}
Fix $T$ be the largest time such that $T\leq 1$ and $T<T^n,$ and  \eqref{eq:smallness4} is satisfied. Assuming with no loss of generality that $Cc\leq1,$
Inequality \eqref{eq:jrhou} yields
$$\|j^n-\rho^n u^n\|_{L^{4/3}(0,T;L^2)}\leq 2e^{3/2}C_q N_q^{1/2} E_0^{1/2}.$$
Therefore, if 
$$\|u_0\|_{\dot H^{1/2}} + 2e^{3/2}C_q N_q^{1/2} E_0^{1/2}<c,$$
then both \eqref{eq:smallness3} and \eqref{eq:smallness4} are satisfied with a strict inequality whenever $T\leq1$ and $T<T^n.$
We observe moreover that \eqref{eq:smallness4} together with  \eqref{eq:rhoNq}, \eqref{eq:jNq} and \eqref{eq:eNq} implies that
\begin{equation}\label{eq:boundv2}\underset{t\in[0,\min(1,T^n))}\sup\|\langle v\rangle^2f^n(t)\|_{L^1(\R^3_v;L^\infty(\R^3_x))}\leq C_qN_{q+2}.\end{equation}
Combining with a bootstrap argument and \eqref{eq:continuation},  one can conclude that $T^n>1$ and that all the above inequalities hold true 
on $[0,1]$. Of course, we still have \eqref{eq:boundfn} as well as the energy balance \eqref{eq:energyb}. 
All this allows to repeat the compactness argument of \cite{BDGM} and pass to the limit
in System $(VNS_n).$ The obtained solution $(f,u)$ satisfies in addition  \eqref{eq:smallness3} and \eqref{eq:smallness4} with $T=1$
and thus \eqref{eq:uuL1}, \eqref{eq:uuL2} and \eqref{eq:wuuL1} with $T=1.$
\smallbreak
Let us next establish the Log Lipschitz Inequality \eqref{eq:loglip} on $[0,1].$}
{To do so, we split $u$ into $\wt u + u_L^1 + u_L^2$ where $u_L^1$ and $u_L^2$ have been defined in \eqref{def:uL} 
with $S=j-\rho u.$   By the previous construction and \eqref{eq:jrhou} it is known that
  $S$ is small in $L^{4/3}([0,1];L^2).$ 
Hence,  Proposition \ref{p:NS}  gives us that  $u_L^1$ and $\wt u$ are in  $\wt L^1(0,1;\dot H^{5/2}).$  
These two functions are thus  in $L^1(0,T;C_{\wt\omega_\eta})$ for all 
$\eta\in(0,1/2)$ since $\wt L^1(0,T;\dot H^{5/2})$ is embedded in $L^1(0,T;C_{\wt\omega_\eta})$
(see \cite[p. 237]{BCD}).} 

{Using Inequality \eqref{eq:jrhou}  for $(f,u),$ we see that   $S\in L^2([0,1];L^2),$ which ensures that
 $\nabla^2u_L^2\in L^{2}(0,1;L^2)$ since  $u_L^2|_{t=0}=0$ and 
$$\d_tu_L^2-\Delta u_L^2=\cP S\in L^2(0,1;L^2).$$
Moreover, the fact that   $u\in L^2(0,1;L^\infty)$ and  $\rho, j$ are bounded (a consequence of \eqref{eq:rhoNq} and \eqref{eq:jNq})
implies that  we also have  $S\in L^2(0,1;L^\infty).$ 
Hence, by H\"older inequality and the fact
that the Leray projector $\cP$ maps $L^4$ to itself,  we have  $\cP S\in L^2(0,1;L^4).$ 
Using  the standard maximal regularity result for linear parabolic equations (see e.g.  \cite{Amann}) thus
ensures that $\nabla^2 u_L^2 \in L^2(0,1;L^4).$ 
Now, using the following Gagliardo-Nirenberg inequality:
$$\|z\|_{L^\infty}\leq C\sqrt{\|\nabla z\|_{L^2}\|\nabla z\|_{L^4}}\,,$$
we get  $\nabla u_L^2\in L^2(0,1;L^\infty)$ which is better than \eqref{eq:loglip}.
This completes the proof of the existence of a solution satisfying the properties listed in Theorem \ref{thm:main2}, until time $1.$
The reader is invited to read Subsection \ref{s:continuation} 
 to discover how to extend the solution for all time. }


\subsection{Uniqueness} 

Our aim is to establish the uniqueness of solutions satisfying the properties of regularity stated in Theorem \ref{thm:main2}. 
Compared to Theorem \ref{thm:main1}, the difficulty is that  having  $u_0$ in $\dot H^{1/2}$
does not imply that the corresponding solution $(f,u,P)$ of (VNS) satisfies  $\nabla u\in L^1_{loc}(\R_+;L^\infty)$  for the simple reason that 
even $e^{t\Delta} u_0$ does not have this property. 
It turns out however that as in the two-dimensional case treated before in \cite{HMMM}, the 
 Log Lipschitz regularity \eqref{eq:loglip} ensures uniqueness in the framework of critical regularity solutions. 
\medbreak
So, assume that we are given two solutions $(f_1,u_1,P_1)$ and  $(f_2,u_2,P_2)$ 
fulfilling the properties listed in Theorem \ref{thm:main2} and \eqref{eq:loglip}. 
Even if not stated explicitly there, this implies that $\rho_i,$ $j_i$ and $m_{2,i}$ are in $L^\infty(0,T;L^\infty)$ for $i=1,2$
(use  \eqref{eq:rhoNq},  \eqref{eq:jNq} and \eqref{eq:eNq}). 
\medbreak
We start again from \eqref{eq:du0}, bounding 
the first term of the right-hand side according  to \eqref{eq:uniq1} and still splitting 
$\dF$ (defined in \eqref{eq:dF}) into  $A_1+A_2+A_3+A_4.$ Term $A_2$
has to be treated differently as before as we do not have any control on $\|\nabla u_1\|_{L^\infty}.$ 
The second change is that when bounding $\dZ,$ one cannot  use   \eqref{eq:f01}.
Now, by using Cauchy-Schwarz inequality and \eqref{eq:formularho}  with $Z=Z_1,$ we get
$$A_2\leq \|\sqrt{\rho_1}\,\du\|_{L^2}\biggl(\int\!\!\!\int f_0(x,v)|u_1(t,X_1(t,x,v))-u_1(t,X_2(t,x,v))|^2\,dv\,dx\biggr)^{1/2}.$$
Let $\omega_\eta:r\mapsto r(2-\log r)^{2-2\eta}.$ Remembering \eqref{eq:loglip}, one can assert that there exists an integrable function $\alpha$ on $[0,T]$ such that
$$\int\!\!\!\int |u_1(t,X_1(t))-u_1(t,X_2(t))|^2\,f_0\,dv\,dx\leq \alpha^2(t) 
\int\!\!\!\int  \omega_\eta(|\dX(t)|^2) \, f_0\,dv\,dx.$$
Observe that the function $\omega_\eta$ is concave and increasing on some nontrivial interval $[0,r_0]$ and 
that we have $|\dX(t,x,v)|^2\leq r_0$ pointwise for all $t\in[0,T]$ and $(x,v)\in\R^3\times\R^3$ provided $T>0$ is small enough.
Indeed, relation \eqref{eq:X} implies that
$$\dX(t,x,v)=\int_0^t \bigl(e^{\tau-t}-1\bigr) \bigl(u_2(\tau,X_2(\tau,x,v))- u_1(\tau,X_1(\tau,x,v))\bigr)\,d\tau,$$
and thus 
$$|\dX(t,x,v)|\leq \sqrt t\,\bigl(\|u_1\|_{L^2(0,t;L^\infty)} + \|u_2\|_{L^2(0,t;L^\infty)}\bigr)\cdotp $$
Therefore, assuming with  no loss of generality  that  the total mass $M_0$ defined in \eqref{eq:M0} is equal to $1,$
Jensen inequality allows us to get
$$A_2(t)\leq \alpha(t) \|\sqrt{\rho_1}\,\du\|_{L^2} \bigl(\omega_\eta(\|\sqrt{f_0}\,\dX\|_{L^2}^2)\bigr)^{1/2}.$$ 
Similarly,  the term $B_1$ that has been defined in  \eqref{eq:f01} satisfies
$$\begin{aligned}
B_1(t)&\leq
\|\sqrt{f_0}\,\dV(t)\|_{L^2} \biggl(\int\!\!\!\int |u_1(t,X_1(t))-u_1(t,X_2(t))|^2\,f_0\,dv\,dx\biggr)^{1/2}\\
&\leq \alpha(t)\|\sqrt{f_0}\,\dV(t)\|_{L^2}. 
\end{aligned}$$
 In the end we get the following inequality for $Y:=\|\du\|_{L^2}^2+\|\sqrt{f_0}\,\dZ\|_{L^2}^2$:
\begin{multline*} \frac d{dt}Y+
\|\nabla\du\|_{L^2}^2\lesssim (1+\|\rho_1\|_{L^\infty})^{1/2} \alpha \,\omega_\eta(Y)
+\bigl(1+\|\nabla u_2\|_{L^3}^2+\|(\rho_1,\rho_2)\|_{L^\infty}\|u_1\|_{L^\infty}^2 \\+\|(m_{2,1},m_{2,2})\|_{L^\infty}
+\|\rho_2\|_{L^\infty}\|(u_1,u_2)\|_{L^2(0,t;L^\infty)}\bigr)Y. 
\end{multline*}
Note that  $\rho_1, \rho_2, m_{2,1}, m_{2,2}$  are bounded, $u_1, u_2$ are in $L^2(0,T;L^\infty)$ 
and $\alpha$ is integrable. Since the modulus of continuity $\omega_\eta$ satisfies Osgood's condition, that is 
$$\int_0^{r_0} \omega_\eta^{-1}(r)\,dr=\infty \ \hbox{ for some }\  r_0>0,$$
 one can conclude that $Y\equiv0$ on $[0,T]$ (apply e.g. \cite[Lem. 3.4]{BCD}). 
 This completes the proof of the uniqueness part of Theorem \ref{thm:main2}.
  

{\subsection{Continuation of the solution} \label{s:continuation}
So far, we constructed a critical regularity solution $(f,u,P)$ on the time interval $[0,1].$ To continue this solution into a global one, 
the key is to prove that there exists some $t_0\in(0,1)$ so that the smallness property \eqref{eq:smalldata} is satisfied 
by  $f(t_0)$ and $u(t_0).$  Denoting by $(\wt f,\wt u,\wt P)$ the corresponding smoother solution produced by Theorem \ref{thm:main1}, one can 
then leverage the uniqueness result we have just proved so as to justify that $(f,u,P)=(\wt f,\wt u,\wt P)$ on $[t_0,1].$
Hence,  $(\wt f,\wt u,\wt P)$ is a global extension of $(f,u,P)$ that satisfies all the asymptotic properties
stated in Theorem \ref{thm:main1}. 
\medbreak
To check  \eqref{eq:smalldata}, we need to find some $t_0\in(0,1)$ so  that $u(t_0)$ is in $L^1$ (with some control on the norm) 
and that $E_{1}(t_0)$ is small (so far, we do not know whether it is finite). 
As seen before, we do not really need to control the $L^1$ norm of $u,$ but rather 
 regularity $\dot B^{-3/2}_{2,\infty}.$ 
 Now,  Inequality \eqref{eq:3/2}, the embeddings that follow and Cauchy-Schwarz inequality ensure that
\begin{align}\label{eq:bound2}\nonumber\underset{t\in[0,1]}\sup \|u(t)\|_{\dot B^{-\frac32}_{2,\infty}} &\leq  \|u_0\|_{\dot B^{-\frac32}_{2,\infty}}
+\|u\cdot\nabla u\|_{L^2(0,1;L^1(\R^3))} 
+ \biggl\|\Int  {(u-v)f}\,dv\biggr\|_{L^2(0,1;L^1(\R^3))}\\
 &\leq  \|u_0\|_{\dot B^{-\frac32}_{2,\infty}}+ E_{0,0} + \sqrt{M_0E_{0,0}}\:.
\end{align}
Remember in addition that $f$ satisfies  \eqref{eq:boundv2}.
Let us define the smallness constant $c_0$ in \eqref{eq:smalldata} according to \eqref{eq:boundv2} and
\eqref{eq:bound2}, and assume that
 $$E_{0,0}+\||v|^2f_0\|_{L^1_{x,v}}\leq \frac 12c_0.$$ The energy balance \eqref{eq:energyb} and the fact that $E_1=D_0$ then ensure that
 $$
 \int_0^1 E_{1}\,d\tau \leq \frac12 c_0.$$ Hence, 
 there exists some $t_0\in(0,1)$ so that $E_1(t_0)\leq c_0/2,$ and
 Condition \eqref{eq:smalldata} is thus satisfied at time $t=t_0.$ 
 This completes the global existence part of Theorem \ref{thm:main2}, with optimal time decay. }


\subsection{The case of an initial velocity in $\dot B^{1/2}_{2,1}$} \label{ss:critic}

Here we justify Remark \ref{r:main2} pertaining to  the case where  the initial data $(f_0,u_0)$  satisfy \eqref{eq:f0} and 
$$\|u_0\|_{B^{1/2}_{2,1}(\R^3)} + \int\!\!\!\int |v|^2 f_0\,dv\,dx\leq c\ll1.$$
Compared to the case $u_0\in\dot H^{1/2},$ 
the key  difference lies in the fact that  $u_L^1=e^{t\Delta}u_0$   satisfies 
$$u_L^1\in \cC_b(\R_+;\dot B^{1/2}_{2,1})\cap L^1(\R_+;\dot B^{5/2}_{2,1}),$$
which implies that $\nabla u_L^1\in  L^1(\R_+;L^\infty)$ with 
$$
\int_0^\infty \|\nabla u_L^1\|_{L^\infty} \leq Cc.$$
Consequently, one can bound $\rho$ and $m_2$ in $L^\infty(0,T\times\R^3)$ (and thus $j$) according to \eqref{eq:boundrho} and \eqref{eq:bounde}
whenever \eqref{eq:Lip} holds true. 
Now, these latter properties enable us to bound 
$S:=j-\rho u$ in $L^2_{loc}(\R_+;L^\infty)$ and one can  argue like in Subsection \ref{s:size1}
to prove \eqref{eq:Lip} on, say, the time interval $[0,1].$ 
The rest of the proof  goes as before. Compared to Theorem \ref{thm:main2}, 
the gain is that we do not need to assume any longer that $N_q(f_0)<\infty$ for some $q>5.$

\medbreak\noindent{\bf Conflict of interest.}
The author has no financial or non-financial interest to disclose. The author has no conflict of interest to declare that are relevant to the content of this article.


{\medbreak\noindent{\bf Acknowledgments.}  
The  author is partially  supported by \emph{Institut Universitaire de France}. He
 is grateful to the anonymous referees, whose many pertinent comments have greatly improved the presentation of the results in this article.}


\appendix\section{The control of the flow, and applications} \label{s:AppendixA}

Most of the regularity estimates that were used in this paper require boundedness of the density $\rho$ and, to a lesser extent, of the momentum $j$ and of the  energy distribution $m_2$.
In order to derive these bounds, we argue as in \cite{HMM}, computing
$\rho,$ $j$  and $m_2$ in terms of  the characteristics $Z:=(X,V)$ associated
 to the $f$ equation. To do so, looking at  $(t,x,v)\in \R_+\times \R^3\times\R^3$ as parameters, 
 we consider the following  system of ODEs:\begin{equation}\label{eq:car} \left\{\begin{aligned}
	\d_sX(s;t,x,v)&= V(s;t,x,v)\\
	\d_sV(s;t,x,v)&=u(s; X(s;t,x,v))- V(s;t,x,v)\\
	X(t;t,x,v)=x&\andf V(t;t,x,v)=v.	
	\end{aligned}\right.	\end{equation}
	In other words,  $Z=(X,V)$ is the flow of the time-dependent vector-field 
	$$F(t,x,v)=(v,u(t,x)-v),\qquad (t,x,v)\in\R_+\times\R^3\times\R^3.$$
The  Cauchy-Lipschitz theorem ensures that 
	for \eqref{eq:car} to be solvable, it suffices that $u\in L^1_{loc}(\R_+;W^{1,\infty}).$ 
		Furthermore, since ${\rm Div}_{x,v} F=-3,$ we have 
$$\det D_{x,v} Z(s;t,x,v)= e^{3(t-s)}.$$
The solution  of the first equation  of (VNS)  then reads:	
	\begin{equation}\label{eq:formularho}
	f(t,x,v)=e^{3t} f_0\bigl(X(0;t,x,v),V(0;t,x,v)\bigr),\end{equation}
	and thus 
	$$	\rho(t,x)=\int f(t,x,v)\,dv= e^{3t}\int  f_0\bigl(X(0;t,x,v),V(0;t,x,v)\bigr)\,dv.	$$
	According to \cite[Lem. 4.4]{HMM},  if the Lipschitz condition \eqref{eq:Lip} is satisfied
		then, for all $t\in[0,T]$ and $x\in\R^3,$  the map $\Gamma_{t,x}: v\mapsto V(0;t,x,v)$ is a bilipschitz homeomorphism  on $\R^3$
	satisfying
	\begin{equation}\label{eq:diff}
	\det D_v \Gamma_{t,x}(v)\geq e^{3t}/2.
	\end{equation}
	 	Now, performing the change of variable $w=\Gamma_{t,x}(v)$ in \eqref{eq:formularho}, we get
		$$
		\rho(t,x)= e^{3t}\int f_0(X(0;t,x,\Gamma_{t,x}^{-1}(w)),w)\det D_w\Gamma^{-1}_{t,x}(w)\,dw,
		$$
		and thus, owing to \eqref{eq:diff},
		\begin{equation}\label{eq:boundrho} |\rho(t,x)| 
\leq	2\|f_0\|_{L^1(\R^3_v;L^\infty(\R^3_x))}.		\end{equation}
		Similarly, we have
		$$\begin{aligned}
		m_2(t,x)&=e^{3t}\int|v|^2  f_0\bigl(X(0;t,x,v),V(0;t,x,v)\bigr)\,dv\\
		&=e^{3t}\int  |\Gamma_{t,x}^{-1}(w)|^2 f_0(X(0;t,x,\Gamma_{t,x}^{-1}(w)),w)\det D_w\Gamma^{-1}_{t,x}(w)\,dw.
		\end{aligned}$$
		In order to bound  $|\Gamma_{t,x}^{-1}(w)|^2,$ we integrate the second line of \eqref{eq:car} on $[0,t],$ getting
		$$	\Gamma_{t,x}^{-1}(w)=e^{-t}\biggl(w+\int_0^t e^s u\bigl(s,X(s;t,x,\Gamma_{t,x}^{-1}(w))\bigr)ds\biggr),	$$
whence 	 
$$\bigl|\Gamma^{-1}_{t,x}(w)\bigr|\leq e^{-t}|w| +\int_0^t e^{s-t} \|u(s)\|_{L^\infty}\,ds.$$
	Therefore, using again \eqref{eq:diff}, it is easy to get
	$$
	m_2(t,x)\leq 4e^{-2t}\||v|^2 f_0\|_{L^1(\R^3_v;L^\infty(\R^3_x))} +4\biggl(\int_0^t e^{s-t}\|u(s)\|_{L^\infty}\,ds\biggr)^2
	\| f_0\|_{L^1(\R^3_v;L^\infty(\R^3_x))}.	$$
	{Now, we have, according to Gagliardo-Nirenberg inequality
	$$	\|u\|_{L^\infty}\leq C\|u\|_{L^2}^{2/5}\|\nabla u\|_{L^\infty}^{3/5}.$$
	Hence, using H\"older inequality, we deduce that
	 $$\int_0^t e^{s-t}\|u(s)\|_{L^\infty}\,ds\leq C\biggl(\underset{s\in[0,t]}\sup \|u(s)\|_{L^2}\biggr)^{2/5}
	 \biggl(\int_0^t\|\nabla u\|_{L^\infty}\,ds\biggr)^{3/5}.$$
	 Since \eqref{eq:Lip} is satisfied, remembering  \eqref{eq:energyb} 
	 allows to conclude that
	\begin{equation}\label{eq:bounde}
	m_2(t,x)\leq 4e^{-2t}\||v|^2 f_0\|_{L^1(\R^3_v;L^\infty(\R^3_x))} + C\delta^{6/5} E_{0,0}^{2/5} \| f_0\|_{L^1(\R^3_v;L^\infty(\R^3_x))}.\end{equation}}
	Note that the above calculations actually give a bound of $\langle v\rangle ^2f$ in $L^\infty(\R_+;L^1(\R^3_v;L^\infty(\R^3_x))).$
	\medbreak
		We now look at the situation  where $u$ only satisfies the  log Lipschitz property \eqref{eq:loglip}
	and is in $L^1_{loc}(\R_+;L^\infty).$ 
	Then, Osgood lemma (see e.g.  \cite[Chap. 3]{BCD}) still guarantees 
	that  System \eqref{eq:car} has a unique solution. Furthermore,  direct manipulations reveal that 	\begin{align}\label{eq:X}
	X(s;t,x,v)&=x+(1-e^{t-s})v +\int_s^t \bigl(e^{\tau-s}-1\bigr) u(\tau, X(\tau;t,x,v))\,d\tau,\\
\label{eq:V}	V(s;t,x,v)&=e^{t-s} v - \int_s^t e^{\tau-s}  u(\tau, X(\tau;t,x,v))\,d\tau.\end{align}	
	This implies that for all $t\in[0,T],$ we have
	$$|v|\leq  e^{-t}|V(0;t,x,v)|+\int_0^t e^{\tau-t}\|u(\tau)\|_{L^\infty}\,d\tau.$$
	Hence, since $f(t,x,v)=e^{3t} f_0(Z(0;t,x,v)),$ we have  for all $q>3,$
$$\begin{aligned}
\rho(t,x)=\int f(t,x,v)\,dv &=  e^{3t}\int \langle v\rangle^q  f_0(Z(0;t,x,v)) \,  \langle v\rangle^{-q} \,dv\\
&\leq C_q e^{3t}\biggl(\underset{v\in\R^3}\sup  \langle v\rangle^q  f_0(Z(0;t,x,v)) \biggr)\\
&\leq C_q e^{3t} \biggl(e^{-tq}\underset{v\in\R^3}\sup   \langle V(0;t,x,v)\rangle^q  f_0(Z(0;t,x,v)) \\&\hspace{1cm}
+ \biggl(\int_0^t e^{\tau-t} \|u(\tau)\|_{L^\infty}\,d\tau\biggr)^q \underset{v\in\R^3}\sup{f_0(Z(0;t,x,v))}\biggr)\cdotp
\end{aligned}
$$
Denoting $N_q=\sup_{(x,v)\in\R^3\times\R^3} \langle v\rangle^q f_0(x,v),$ one can conclude that 
\begin{equation} 
\label{eq:rhoNq} \|\rho(t)\|_{L^\infty} \leq C_q e^{3t} \bigl(1+ \|u\|_{L^1(0,t;L^\infty)}\bigr)^q N_q.\end{equation}
Similarly, we have, if $q>3,$
$$\begin{aligned}
j(t,x)=\int v f(t,x,v)\,dv &=  e^{3t}\int  v\langle v\rangle^q  f_0(Z(0;t,x,v)) \,  \langle v\rangle^{-q} \,dv\\
&\leq C_{q+1} e^{3t}\biggl(\underset{v\in\R^3}\sup  \langle v\rangle^{q+1}  f_0(Z(0;t,x,v)) \biggr)\cdotp
\end{aligned}
$$
Hence, 
\begin{equation} 
\label{eq:jNq} \|j(t)\|_{L^\infty} \leq C_{q+1} e^{3t} \bigl(1+ \|u\|_{L^1(0,t;L^\infty)}\bigr)^{q+1} N_{q+1}.\end{equation}
The same argument leads to 
\begin{equation} 
\label{eq:eNq} \|m_2(t)\|_{L^\infty} \leq C_{q+2} e^{3t} \bigl(1+ \|u\|_{L^1(0,t;L^\infty)}\bigr)^{q+2} N_{q+2}.\end{equation}
{Again, note that  the same  bounds hold true for the norms of  $f(t),$ $vf(t)$ and $|v|^2f(t)$ in $L^1(\R^3_v;L^\infty(\R^3_x)).$}


\section{{Besov spaces, interpolation and parabolic maximal regularity}}	\label{s:AppendixB}

{For reader's convenience, we present some properties of the homogeneous Besov spaces, 
as well as some interpolation results and maximal regularity estimates.
\subsection{Besov spaces}
We here give a short presentation of Besov spaces.  More details may be found in e.g.  \cite[Chap. 2]{BCD}.
\smallbreak
The  most common definition is based on  a \emph{homogeneous Littlewood-Paley decomposition}: 
we fix some smooth nonincreasing function $\chi:\R_+\to [0,1]$ supported in $[0,2]$ with $\chi\equiv1$ on $[0,1],$
then set $\varphi(\xi):= \chi(|\xi|/2)-\chi(|\xi|)$ for all $\xi\in\R^d.$ By construction, we have
\begin{equation}\label{eq:LP}\sum_{j\in\Z} \varphi(2^{-j}\xi)= 1\quad\hbox{for all }\ \xi\in\R^d\setminus\{0\}.\end{equation}
For all $j\in\Z$ we define the spectral cut-off operator $\ddj$ acting on tempered distributions by 
$$\ddj z:= \varphi(2^{-j}D) z:= \mathcal F^{-1}(\varphi(2^{-j}\cdot) \mathcal F z),$$
where $\mathcal F$ denotes the Fourier transform on $\R^d.$
\medbreak
It is not difficult to check that we have, in the tempered distributions sense,
\begin{equation}\label{eq:Lpdecompo} z=\sum_{j\in\Z} \ddj z\quad\hbox{modulo polynomials.}\end{equation}
Granted with a homogeneous Littlewood-Paley decomposition, one can 
define homogeneous Besov (semi)-norms as follows for all $1\leq p,r\leq\infty$ and $s\in\R$:
\begin{equation}\label{def:besov}\|z\|_{\dot B^s_{p,r}}:=\Bigl\| 2^{js}\|\ddj z\|_{L^p(\R^d)}\Bigr\|_{\ell^r(\Z)}.\end{equation}	
Note that  $\|P\|_{\dot B^s_{p,r}}=0$ for any polynomial $P.$ In order to have a  normed space endowed with $\|\cdot\|_{\dot B^s_{p,r}},$
we here adopt the viewpoint of \cite{BCD}: we define the \emph{homogeneous Besov space $\dot B^s_{p,r}$} to be the set of tempered distributions $z$
such that 
$$\|z\|_{\dot B^s_{p,r}}<\infty\andf \underset{j\to-\infty}\lim \|\chi(2^{-j}D)z\|_{L^\infty}=0.$$
Let us underline that all the above definitions are independent of the choice of
the cut-off function $\chi,$ up to change of norms into equivalent norms. 
\medbreak
In the present paper, we always take $p=2$ and $r\in\{1,2,\infty\}.$
Then, it can be easily seen from \eqref{eq:LP} that for all $s\in\R,$ we have
\begin{equation}\label{eq:equivHs}
 \|u\|_{\dot H^s}\simeq \|u\|_{\dot B^s_{2,2}}\andf \|u\|_{\dot B^s_{2,\infty}}\leq  \|u\|_{\dot B^s_{2,2}}\leq  \|u\|_{\dot B^s_{2,1}}. \end{equation}}
{From Fourier-Plancherel theorem, we can see  that for all  $s\in\R$ and $r\in[1,\infty],$ we have
\begin{equation}\label{eq:equivk} \|\nabla z\|_{\dot B^{s-1}_{2,r}}\simeq \|z\|_{\dot B^s_{2,r}},\end{equation}
and that the Leray projector $\cP$ satisfies
\begin{equation}\label{eq:cP}
\|\cP z\|_{\dot B^s_{2,r}}\leq \|z\|_{\dot B^s_{2,r}}.\end{equation}}
{One of the advantages of  the Besov space
$\dot B^{d/2}_{2,1}(\R^d)$ compared with the Sobolev space  $\dot H^{d/2}(\R^d)$ is its good behavior with respect to embedding and product laws:
it is embedded in the set of bounded continuous functions and satisfies 
\begin{equation}\label{eq:productlaw} 
\| z_1\,z_2\|_{\dot B^{s}_{2,1}}\leq C_{s,d} \|z_1\|_{\dot B^{d/2}_{2,1}}\|z_2\|_{\dot B^{s}_{2,1}},\qquad -d/2<s\leq d/2.\end{equation}}


{\subsection{Parabolic maximal regularity}
We here give a short presentation of maximal regularity estimates  for the heat equation
\begin{equation}\label{eq:heat}
\left\{\begin{aligned} &z_t -\Delta z=g\quad&\hbox{in }\  \R_+\times\R^d,\\
 &z|_{t=0}=z_0\quad&\hbox{in }\ \R^d.\end{aligned}\right.\end{equation}
Let us first recall the classical maximal regularity result (see e.g. \cite{Amann}).
\begin{proposition} \label{p:standardmaxreg}  Let  $1<p,r<\infty.$ For any $z_0\in \dot B^{2-2/r}_{p,r}(\R^d)$  and 
$g\in L^r(0,T;L^r(\R^d)),$ Equation \eqref{eq:heat} has a unique solution $z\in\cC([0,T];\dot B^{2-2/r}_{p,r}(\R^d))$ such that
$z_t$ and $\nabla^2_xz$ are in $L^r(0,T;L^p(\R^d)),$ and we have 
$$\|z\|_{L^\infty(0,T;\dot B^{2-2/r}_{p,r})}+\|z_t\|_{L^r(0,T;L^p)} +\|\nabla^2_xz\|_{L^r(0,T;L^p)} \leq C_{p,r,d}\Bigl(\|z_0\|_{\dot B^{2-2/r}_{p,r}}+ \|g\|_{L^r(0,T;L^p)}\Bigr)\cdotp$$
\end{proposition}}

{In the case of initial data and source terms in Besov spaces,  we have the following a priori estimates for 
\eqref{eq:heat},  first observed by Chemin and Lerner \cite{ChL} in the particular case of Sobolev spaces then 
extended by Chemin in \cite{Che}:
 \begin{proposition}\label{p:maxreg} There exists a constant $C$ depending only on the dimension $d$ such that
 for any $1\leq p, r \leq\infty,$ $s\in\R,$  $1\leq \rho_1\leq\rho_2\leq\infty$ and $T\geq0,$  we have
 \begin{equation}\label{eq:maxreg}\|z\|_{\wt L^{\rho_1}_T(\dot B^{s+2/\rho_1}_{p,r})} \leq C\Bigl(\|z_0\|_{\dot B^s_{p,r}}
 +\|g\|_{\wt L^{\rho_2}_T(\dot B^{s-2+2/\rho_2}_{p,r})}\Bigr),\end{equation}
 where we used the notation $\|a\|_{\wt L^\rho_T(\dot B^\sigma_{p,r})}:=
 \Bigl\|2^{j\sigma}\bigl\|\ddj a\bigr\|_{L^\rho(0,T;L^p(\R^d))}\Bigr\|_{\ell^r(\Z)}\cdotp$
  \end{proposition}
\begin{proof} For reader's convenience, let us give the proof in the case $p=2$ (the only one that we used here). The starting point is
that  for some absolute constant $c,$ we have 
\begin{equation}\label{eq:tdj}
\bigl\|e^{t\Delta} \ddj z\|_{L^2}\leq e^{-ct2^{2j}}\|\ddj z\|_{L^2}.\end{equation}
This stems from computing  $e^{t\Delta} \ddj z$ in the Fourier space, and  Fourier-Plancherel theorem. 
\smallbreak
Now, localizing the heat equation by means of $\ddj$ gives for all $j\in\Z,$
$$\left\{\begin{aligned} &\d_t\ddj z -\Delta \ddj z=\ddj g,\\
 &\ddj z|_{t=0}=\ddj z_0.\end{aligned}\right.$$
Hence, the Duhamel formula gives
$$\ddj z(t)= e^{t\Delta}\ddj z_0+\int_0^t e^{(t-\tau)\Delta}\ddj g(\tau)\,d\tau.$$
Taking the $L^2$ norm and using Minkowski inequality, \eqref{eq:tdj} and a convolution inequality 
with respect to the time variable and taking in the end the $\ell^r(\Z)$ norm
 yields Inequality \eqref{eq:maxreg}  in the particular case where $p=2.$\end{proof} }

{It is easy to compare the norms defined in the above proposition with more classical ones: 
from  Minkowski inequality, we have:
\begin{equation} \label{eq:Minko}\begin{aligned}
 \|a\|_{\wt L^\rho_T(\dot B^\sigma_{p,r})}&\leq \|a\|_{L^\rho_T(\dot B^\sigma_{p,r})}
 &\hbox{if }\  r\geq\rho,\\
 \|a\|_{L^\rho_T(\dot B^\sigma_{p,r})}&\leq  \|a\|_{\wt L^\rho_T(\dot B^\sigma_{p,r})}
 &\hbox{if }\  r\leq\rho.
\end{aligned}\end{equation}}


{\subsection{Interpolation and embedding}
Homogeneous Besov norms satisfy a number of interpolation inequalities.
 Let us just mention the following ones which  can be easily proved by splitting 
the sum in \eqref{eq:Lpdecompo} into $j\leq N$ and $j>N,$  and then optimizing $N$:
\begin{equation}\label{eq:interpo3} 
\|u\|_{\dot B^1_{2,2}}\lesssim \|u\|_{\dot B^2_{2,2}}^{\frac{\sigma +1}{\sigma+2}} \|u\|_{\dot B^{-\sigma}_{2,\infty}}^{\frac{1}{\sigma+2}}\andf  
\|u\|_{\dot B^0_{2,2}}\lesssim \|u\|_{\dot B^1_{2,2}}^{\frac{\sigma +1}{\sigma+2}} \|u\|_{\dot B^{-\sigma-1}_{2,\infty}}^{\frac{1}{\sigma+2}},\qquad \sigma>-1.\end{equation}
By the same method, one can obtain similar estimates for spaces $\wt L_T^\rho(\dot B^s_{2,r})$: 
the time exponent behaves according to H\"older inequality. In particular, this gives \eqref{eq:interpo4}. 
\medbreak
Homogeneous Besov norms with negative regularity indices may be equivalently defined in terms of the heat
flow: we have
\begin{equation}\label{eq:besovheat}
\|z\|_{\dot B^{-\sigma}_{p,r}(\R^d)}\simeq \bigl\|t^{\sigma/2}\|e^{t\Delta}z\|_{L^p(\R^d)}\bigr\|_{L^r(\R_+;\frac{dt}t)}.
\end{equation}
The following example  played an important role in the paper:
$$\|z\|_{\dot B^{-3/2}_{2,\infty}(\R^3)}\simeq  \underset{t>0}\sup \bigl\|t^{3/4}e^{t\Delta} z\bigr\|_{L^2(\R^3)}.$$
From the  Gaussian estimates of the heat kernel, one can easily show that the right-hand 
side of the above inequality is bounded by $\|z\|_{L^1(\R^3)},$ whence  the continuous  
embedding: \begin{equation}\label{eq:embed}
L^1(\R^3)\hookrightarrow \dot B^{-3/2}_{2,\infty}(\R^3).\end{equation}
Similarly, we have
$$\|z\|_{\dot B^{-1/2}_{2,\infty}(\R^3)}\simeq \underset{t>0}\sup \bigl\|t^{1/4}e^{t\Delta z}\bigr\|_{L^2(\R^3)}\lesssim
\|z\|_{L^{6/5}(\R^3)}.$$}
Lorentz spaces can be defined on any measure space $(X,\mu)$
 via  real interpolation between the classical Lebesgue spaces, as follows:
\begin{equation}\label{eq:lorentz}  L_{p,r}(X,\mu) := (L_\infty(X,\mu),L_1(X,\mu))_{1/p,r}
\ \hbox{ for }\ p\in (1,\infty)\andf r \in [1,\infty].
\end{equation}
According to the reiteration theorem (see \cite{Be}), we have as well
\begin{equation}\label{eq:reiteration} L_{p,r}(X,\mu) := (L_{p_0}(X,\mu),L_{p_1}(X,\mu))_{\theta,r}
\ \hbox{ for all } r\in[1,\infty],
\end{equation}
whenever  $p_0\not= p_1$ and $\frac1p=\frac{1-\theta}{p_0}+\frac\theta{p_1}$ with $\theta\in(0,1).$
\smallbreak 
The following classical properties may be found in, e.g.,~\cite{grafakos}:
\begin{itemize}
\item[--] Embedding : $L_{p,r_1}\hookrightarrow L_{p,r_2}$ if $r_1\leq r_2,$ 
and $L_{p,p}=L_p.$ 
\item[--] H\"older inequality :  for $1<p,p_1,p_2<\infty$ and
$1\leq r,r_1,r_2\leq\infty$, we have
\begin{equation}\label{eq:holder}
\|fg\|_{L_{p,r}}\lesssim \|f\|_{L_{p_1,r_1}} \|g\|_{L_{p_2,r_2}}\with
\frac 1p=\frac1{p_1}+\frac1{p_2}\andf 
\frac 1r=\frac1{r_1}+\frac1{r_2}\cdotp
\end{equation}
\end{itemize}
The Lorentz space $L^{d,1}(\R^d)$ has also the following interesting embedding property:
\begin{equation}\label{eq:emdedLorentz}\|z\|_{L^\infty(\R^d)}\leq C\|\nabla z\|_{L^{d,1}(\R^d)}.\end{equation}
One of the reasons for the success of the theory of interpolation is its application to the continuity of linear applications. We have used the following version (see \cite{Be}):
\begin{proposition}\label{p:interpo} Assume that $1\leq p_0<p_1\leq\infty.$
Let  $\Phi:L^{p_0}+L^{p_1}\to L^{p_0}+L^{p_1}$ be a linear mapping such that 
$\Phi|_{L^{p_i}}$ is continuous  from $L^{p_i}$ to $L^{p_i}$ for $i=0,1.$ 
Then, $\Phi:L^{p,r}\to L^{p,r}$ is continuous for  any $p\in(p_0,p_1)$  and $r\in[1,\infty],$ 
 \end{proposition}
As example, let us consider the Leray projector $\cP.$ As it is continuous on $L^p$ for any $p\in(1,\infty),$ it is also  continuous 
on $L^{3,1},$ a property that we used repeatedly in this paper.

   \begin{small}	 

\end{small}

\end{document}